
\documentclass{article}

\usepackage{amsfonts}
\usepackage{amsmath}
\usepackage{mathtools}
\usepackage{multirow}
\usepackage{epstopdf}
\usepackage{algorithm}
\usepackage{algorithmic}
\usepackage{bm}
\usepackage{microtype}
\usepackage{amssymb}
\usepackage{enumerate}
\usepackage{marvosym}
\usepackage{color}
\usepackage{marvosym}
\usepackage{subcaption}
\usepackage{hyperref}

\newtheorem{theorem}{Theorem}

\newtheorem{lemma}{Lemma}
\newtheorem{proof}{Proof}
\newtheorem{corollary}{Corollary}

\newcommand{\x}{\mathbf{x}}
\newcommand{\y}{\mathbf{y}}
\newcommand{\z}{\mathbf{z}}
\newcommand{\g}{\mathbf{g}}

\newcommand{\m}{\mathbf{m}}

\renewcommand{\v}{\mathbf{v}}

\newcommand{\bsigma}{\boldsymbol{\sigma}}

\newcommand{\E}{\mathbb{E}}
\newcommand{\bO}{{\cal O}}
\newcommand{\R}{\mathbb{R}}
\renewcommand{\P}{\mathcal{P}}
\newcommand{\F}{\mathcal{F}}

\newcommand{\<}{\left\langle}
\renewcommand{\>}{\right\rangle}



\usepackage[accepted]{icml2020}


\icmltitlerunning{Convergence Rate of RMSProp and Its Momentum Extension Measured by $\ell_1$ Norm}

\begin{document}

\onecolumn
\icmltitle{On the $\bO(\frac{\sqrt{d}}{T^{1/4}})$ Convergence Rate of RMSProp and Its Momentum Extension Measured by $\ell_1$ Norm}

\begin{icmlauthorlist}
\icmlauthor{Huan Li}{to}
\icmlauthor{Yiming Dong}{goo}
\icmlauthor{Zhouchen Lin}{goo}
\end{icmlauthorlist}

\icmlaffiliation{to}{Institute of Robotics and Automatic Information Systems, College of Artificial Intelligence, Nankai University, Tianjin, China.\\}
\icmlaffiliation{goo}{National Key Lab of General AI, School of Intelligence Science and Technology, Peking University, Beijing, China.\\}
\icmlcorrespondingauthor{Huan Li and Zhouchen Lin}{lihuanss@nankai.edu.cn, zlin@pku.edu.cn}





\vskip 0.3in



\printAffiliationsAndNotice{}  

\begin{abstract}
  Although adaptive gradient methods have been extensively used in deep learning, their convergence rates proved in the literature are all slower than that of SGD, particularly with respect to their dependence on the dimension. This paper considers the classical RMSProp and its momentum extension and establishes the convergence rate of $\frac{1}{T}\sum_{k=1}^T\E\left[\|\nabla f(\x^k)\|_1\right]\leq \bO(\frac{\sqrt{d}C}{T^{1/4}})$ measured by $\ell_1$ norm without the bounded gradient assumption, where $d$ is the dimension of the optimization variable, $T$ is the iteration number, and $C$ is a constant identical to that appeared in the optimal convergence rate of SGD. Our convergence rate matches the lower bound with respect to all the coefficients except the dimension $d$. Since $\|\x\|_2\ll\|\x\|_1\leq\sqrt{d}\|\x\|_2$ for problems with extremely large $d$, our convergence rate can be considered to be analogous to the $\frac{1}{T}\sum_{k=1}^T\E\left[\|\nabla f(\x^k)\|_2\right]\leq \bO(\frac{C}{T^{1/4}})$ rate of SGD in the ideal case of $\|\nabla f(\x)\|_1=\varTheta(\sqrt{d})\|\nabla f(\x)\|_2$.
\end{abstract}

\section{Introduction}\label{sec:intr}

This paper considers adaptive gradient methods for the following nonconvex smooth stochastic optimization problem:
\begin{eqnarray}
\begin{aligned}\label{problem}
\min_{\x\in\R^d} f(\x)=\E_{\xi\sim \P}[h(\x,\xi)],
\end{aligned}
\end{eqnarray}
where $\xi$ is a random variable and $\P$ is the data distribution.

When evaluating the convergence speed of an optimization method, traditional optimization theories primarily focus on the dependence on the iteration number. For example, it is well known that SGD reaches the precision of $\bO(\frac{1}{T^{1/4}})$ after $T$ iterations for nonconvex problem (\ref{problem}), disregarding the constants independent of $T$ within $\bO(\cdot)$. However, this measure is inadequate for high-dimensional applications, particularly in deep learning. Consider GPT-3, which possesses 175 billion parameters. In other words, in the training model (\ref{problem}),
\begin{eqnarray}
\begin{aligned}\notag
d=1.75\times 10^{11}\mbox{ in GPT-3}.
\end{aligned}
\end{eqnarray}
If a method converges with a rate of $\bO(\frac{d}{T^{1/4}})$, it is unrealistic since we rarely train a deep neural network for $10^{44}$ iterations. Therefore, it is desirable to study the explicit dependence on the dimension $d$ and the constants relying on $d$ in $\bO(\cdot)$, and furthermore, to decrease this dependence.

To compound the issue, although adaptive gradient methods, such as AdaGrad \citep{Duchi-2011-jmlr,McMahan-2010-colt}, RMSProp \citep{RMSProp-2012-hinton}, and Adam \citep{adam-15-iclr}, have become dominant in training deep neural networks, their convergence rates have not been thoroughly investigated, particularly with regard to their dependence on the dimension. Current analyses of convergence rates indicate that these methods often exhibit a strong dependence on the dimension. For example, recently, \citet{hong-2024-adagrad} (see Section \ref{lit-review} for the detailed literature reviews) proved the following state-of-the-art convergence rate for AdaGrad with high probability
\begin{eqnarray}
\begin{aligned}\label{lit-rate}
&\frac{1}{T}\sum_{k=1}^T\|\nabla f(\x^k)\|_2\leq \bO\left( \frac{\sqrt{d\ln T}}{T^{1/4}}\left(\sqrt[4]{\sigma_s^2L(f(\x^1)-f^*)}+\sigma_s\right) \right)
\end{aligned}
\end{eqnarray}
under assumption $\|\g^k-\nabla f(\x^k)\|^2\leq\sigma_s^2$, where $\g^k$ represents the stochastic gradient at $\x^k$. In contrast, the convergence rate of SGD \citep{Bottou-2018-siam} can be as fast as
\begin{eqnarray}
\begin{aligned}\label{SGD-rate}
\frac{1}{T}\sum_{k=1}^T\E\left[\|\nabla f(\x^k)\|_2\right]\leq \bO\left(\frac{\sqrt[4]{\sigma_s^2L(f(\x^1)-f^*)}}{T^{1/4}}\right)
\end{aligned}
\end{eqnarray}
with weaker assumption $\E\left[\|\g^k-\nabla f(\x^k)\|^2\right]\leq\sigma_s^2$. We observe that the convergence rate of SGD is $\sqrt{d\ln T}$ times faster than (\ref{lit-rate}). It remains an open problem of how to establish the convergence rate of adaptive gradient methods in a manner analogous to that of SGD, in order to bridge the gap between their rapid convergence observed in practice and their theoretically slower convergence rate compared to SGD.

\noindent\begin{minipage}[t]{0.46\textwidth}
\begin{algorithm}[H]
   \caption{RMSProp}
   \label{rmsprop}
\begin{algorithmic}
   \STATE Initialize $\x^1$, $\v_i^0$
   \FOR{$k=1,2,\cdots,T$}
   \STATE $\v^k=\beta\v^{k-1}+(1-\beta)(\g^k)^{\odot2}$
   \STATE $\x^{k+1}=\x^k-\frac{\eta}{\sqrt{\v^k}}\odot\g^k$
   \ENDFOR
   \STATE
\end{algorithmic}
\end{algorithm}
\end{minipage}
\begin{minipage}[t]{0.52\textwidth}
\begin{algorithm}[H]
   \caption{RMSProp with Momentum}
   \label{momentum}
\begin{algorithmic}
   \STATE Initialize $\x^1$, $\m_i^0=0$, $\v_i^0$
   \FOR{$k=1,2,\cdots,T$}
   \STATE $\v^k=\beta\v^{k-1}+(1-\beta)(\g^k)^{\odot2}$
   \STATE $\m^k=\theta\m^{k-1}+(1-\theta)\frac{1}{\sqrt{\v^k}}\odot\g^k$
   \STATE $\x^{k+1}=\x^k-\eta \m^k$
   \ENDFOR
\end{algorithmic}
\end{algorithm}
\end{minipage}

\subsection{Contribution}
In this paper, we consider the classical RMSProp and its momentum extension \citep{RMSProp-2012-hinton}, which are presented in Algorithms \ref{rmsprop} and \ref{momentum}, respectively. Specifically, for both methods, we prove the convergence rate of
\begin{eqnarray}
\begin{aligned}\notag
\frac{1}{T}\sum_{k=1}^T\E\left[\|\nabla f(\x^k)\|_1\right]\leq\widetilde\bO\left(\frac{\sqrt{d}}{T^{1/4}}\left(\sqrt[4]{\sigma_s^2L(f(\x^1)-f^*)}\right) + \frac{\sqrt{d}}{\sqrt{T}}\left(\sqrt{L(f(\x^1)-f^*)}\right) \right)
\end{aligned}
\end{eqnarray}
measured by $\ell_1$ norm under the assumption of coordinate-wise bounded noise variance, which does not require the boundedness of the gradient or stochastic gradient. Our convergence rate matches the lower bound established in \citep{Arjevani-2023-mp} with respect to $T$, $L$, $f(\x^1)-f^*$, and $\sigma_s$. Since $L(f(\x^1)-f^*)\geq \frac{1}{2}\|\nabla f(\x^1)\|^2=\frac{1}{2}\sum_{i=1}^d |\nabla_i f(\x^1)|^2$ and $\sigma_s^2\geq \sum_{i=1}^d\E\left[|\g_i^k-\nabla_i f(\x^k)|^2\right]$, they could assume large values in high-dimensional settings\footnote{However, empirical observations in deep learning training indicate that each element of $\nabla f(\x)$ tends to be very small, with both $\|\nabla f(\x^1)\|$ and $f(\x^1)-f^*$ typically being of order $\bO(1)$.}. So it is significant to achieve the optimal dependence on $L(f(\x^1)-f^*)$ and $\sigma_s$. The only coefficient left unclear whether it is tight measured by $\ell_1$ norm is the dimension $d$.

Note that $\|\x\|_2\ll\|\x\|_1\leq\sqrt{d}\|\x\|_2$ for any $\x\in\R^d$ with extremely large $d$, and additionally, $\|\x\|_1=\varTheta(\sqrt{d})\|\x\|_2$ when $\x$ is generated from uniform or Gaussian distribution. Therefore, our convergence rate can be considered to be analogous to (\ref{SGD-rate}) of SGD in the ideal case of $\|\nabla f(\x)\|_1=\varTheta(\sqrt{d})\|\nabla f(\x)\|_2$. Fortunately, as demonstrated in Figure \ref{figure1}, we have empirically observed that in real deep neural networks, the relationship $\|\nabla f(\x)\|_1=\varTheta(\sqrt{d})\|\nabla f(\x)\|_2$ holds true.

\begin{figure}[tbhp]
    \centering
    \includegraphics[width=\textwidth]{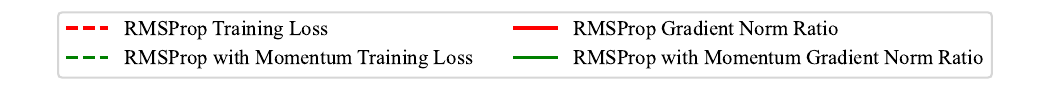}

    \begin{subfigure}{0.49\textwidth}
        \centering
        \includegraphics[width=\linewidth]{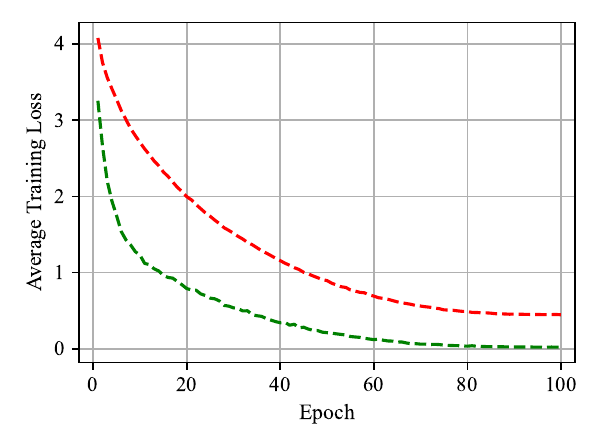}
        \caption{ResNet-50 training loss on Cifar-100}
    \end{subfigure}%
    \begin{subfigure}{0.49\textwidth}
        \centering
        \includegraphics[width=\linewidth]{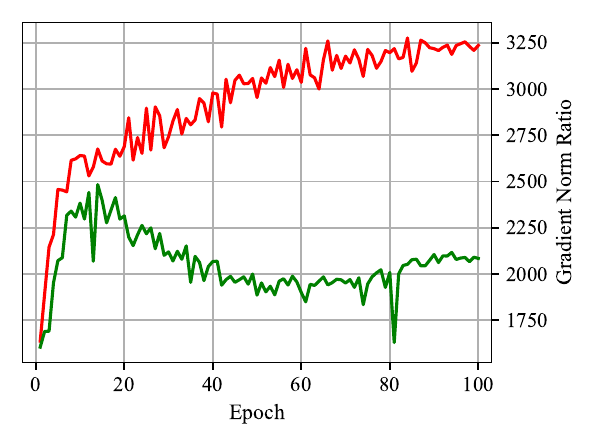}
        \caption{ResNet-50 gradient norm ratio ($\sqrt{d}\approx4869$)}
    \end{subfigure}

    \begin{subfigure}{0.49\textwidth}
        \centering
        \includegraphics[width=\linewidth]{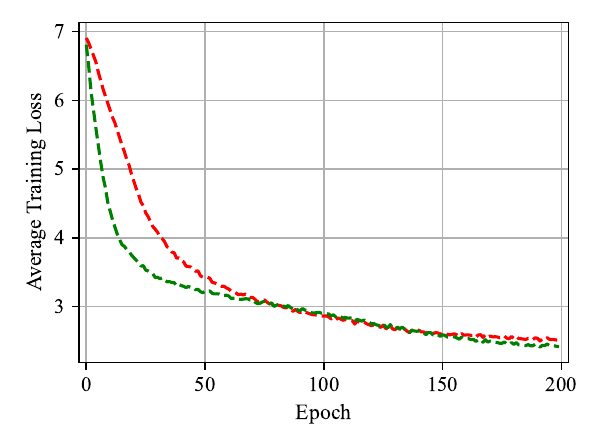}
        \caption{ResNet-50 training loss on ImageNet}
    \end{subfigure}%
    \begin{subfigure}{0.49\textwidth}
        \centering
        \includegraphics[width=\linewidth]{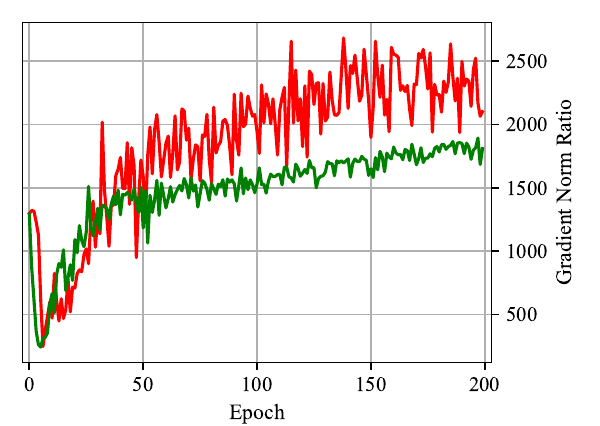}
        \caption{ResNet-50 gradient norm ratio ($\sqrt{d}\approx 5055$)}
    \end{subfigure}

    \begin{subfigure}{0.49\textwidth}
        \centering
        \includegraphics[width=\linewidth]{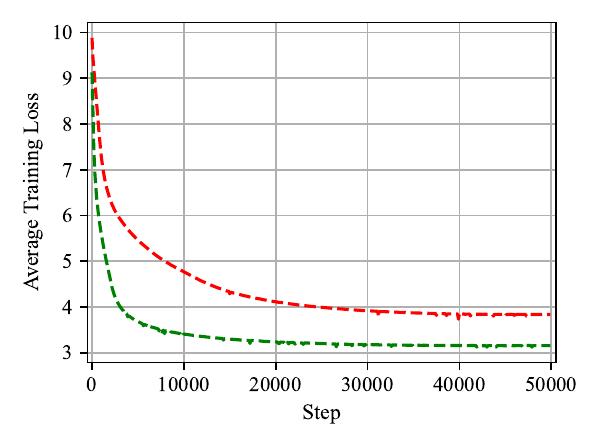}
        \caption{GPT-2 training loss on OpenWebText}
    \end{subfigure}%
    \begin{subfigure}{0.49\textwidth}
        \centering
        \includegraphics[width=\linewidth]{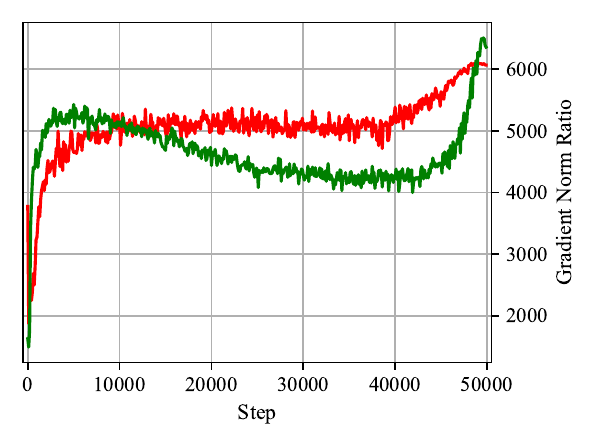}
        \caption{GPT-2 gradient norm ratio ($\sqrt{d}\approx  11148$)}
    \end{subfigure}
    \caption{Illustration of the relationship $\|\nabla f(\x^k)\|_1=\varTheta(\sqrt{d})\|\nabla f(\x^k)\|_2$. We use RMSProp and RMSProp with momentum to train ResNet50 on CIFAR-100 and ImageNet, and train GPT2 on the OpenWebText dataset. The gradient norm ratio shows $\frac{\|\nabla f(\x^k)\|_1}{\|\nabla f(\x^k)\|_2}$ and the average training loss shows the average loss over training samples.}
    \label{figure1}
\end{figure}

\subsection{Notations and Assumptions}\label{sec-notation}

Denote $\x_i$ and $\nabla_i f(\x)$ as the $i$th element of vectors $\x$ and $\nabla f(\x)$, respectively. Let $\x^k$ represent the value at iteration $k$. For scalars, such as $v$, we use $v_k$ instead of $v^k$ to denote its value at iteration $k$, while the latter represents its $k$th power. Denote $\|\cdot\|$, or $\|\cdot\|_2$ if emphasis is required, as the $\ell_2$ Euclidean norm and $\|\cdot\|_1$ as the $\ell_1$ norm for vectors, respectively. Denote $f^*=\inf f(\x)$. Denote $\odot$ to stand for the Hadamard product between vectors. Denote $\F_k=\sigma(\g^1,\g^2,\cdots,\g^k)$ to be the sigma field of the stochastic gradients up to $k$. Let $\E_{\F_k}[\cdot]$ denote the expectation with respect to $\F_k$ and $\E_k[\cdot|\F_{k-1}]$ the conditional expectation with respect to $\g^k$ conditioned on $\F_{k-1}$. We use $f=\bO(g)$, $f=\varOmega(g)$, and $f=\varTheta(g)$ to denote $f\leq c_1 g$, $f\geq c_2g$, and $c_2g\leq f\leq c_1g$ for some constants $c_1$ and $c_2$, respectively, and $\widetilde\bO$ to hide polylogarithmic factors. The base of natural logarithms is denoted by $e$.

Throughout this paper, we make the following assumptions:
\begin{enumerate}
\item Smoothness: $\|\nabla f(\y)-\nabla f(\x)\|\leq L\|\y-\x\|$,
\item Unbiased estimator: $\E_k\left[\g_i^k\big|\F_{k-1}\right]=\nabla_i f(\x^k)$,
\item Coordinate-wise bounded noise variance: $\E_k\left[|\g_i^k-\nabla_i f(\x^k)|^2\big|\F_{k-1}\right]\leq \sigma_i^2$.
\end{enumerate}
Denoting $\bsigma=[\sigma_1,\cdots,\sigma_d]$ and $\sigma_s=\|\bsigma\|_2=\sqrt{\sum_i \sigma_i^2}$, we have the standard bounded noise variance assumption
\begin{equation}\notag
\E_k\left[\|\g^k-\nabla f(\x^k)\|^2\big|\F_{k-1}\right]\leq\sigma_s^2,
\end{equation}
which is used in the analysis of SGD. Note that we do not assume the boundedness of $\nabla f(\x^k)$ or $\g^k$. 

\section{Convergence Rates of RMSProp and Its Momentum Extension}

In this section, we prove the convergence rates of the classical RMSProp and its momentum extension. Both methods are implemented in PyTorch by the following API with broad applications in deep learning:

\hspace*{3cm}\texttt{torch.optim.RMSprop(lr,...,momentum,...),}

\noindent where \texttt{momentum} and $\texttt{lr}$ equal $\theta$ and $(1-\theta)\eta$ in Algorithm \ref{momentum}, respectively. Specially, If we set \texttt{momentum=0}  in default, it reduces to RMSProp.

We establish the convergence rate of RMSProp with momentum in the following theorem. Additionally, if we set $\theta=0$, Theorem \ref{theorem} also provides the convergence rate of RMSProp. For brevity, we omit the details.
\begin{theorem}\label{theorem}
Suppose that Assumptions 1-3 hold. Let $\eta=\frac{\gamma}{\sqrt{dT}}$, $\beta=1-\frac{1}{T}$, $\v_i^0=\lambda\max\left\{\sigma_i^2,\frac{1}{dT}\right\},\forall i$, and $T\geq \frac{e^2}{\lambda}$, where $\theta\in[0,1)$, $\lambda\leq 1$, and $\gamma$ can be any constants serving as hyper-parameters for tuning performance in
practice. Then for Algorithm \ref{momentum}, we have
\begin{eqnarray}
\begin{aligned}\notag
\frac{1}{T}\sum_{k=1}^T\E\left[\|\nabla f(\x^k)\|_1\right]\leq \frac{d^{1/4}}{T^{1/4}}\sqrt{\frac{2F\|\bsigma\|_1}{\gamma}}+\frac{\sqrt{d}}{\sqrt{T}}\frac{4F}{\gamma},
\end{aligned}
\end{eqnarray}
where
\begin{eqnarray}
\begin{aligned}\label{def-F}
\frac{F}{\gamma}=\max\Bigg\{&1,\hspace*{0.3cm}3(2L\gamma+3)\ln(2L\gamma+3),\hspace*{0.3cm}\frac{3(f(\x^1)-f^*)}{\gamma},\\
&3\left(\frac{6e\sigma_s}{\sqrt{\lambda T}}+\frac{3L\gamma}{(1-\theta)^{1.5}}+3\right)\ln\left(\frac{6e\sigma_s}{\sqrt{\lambda T}}+\frac{3L\gamma}{(1-\theta)^{1.5}}+3\right),\\
&\left(\frac{6e\sigma_s}{\sqrt{\lambda T}}+\frac{3L\gamma}{(1-\theta)^{1.5}}\right)\ln\left(\frac{4L\gamma e^2}{\lambda\max\{d\min_i\sigma_i^2,\frac{1}{T}\}}\left(1+\frac{\theta^2}{2T(1-\theta)^2}\right)+\frac{12}{\lambda}\right)
\Bigg\}.
\end{aligned}
\end{eqnarray}
\end{theorem}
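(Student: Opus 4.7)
The plan is to start from the standard $L$-smoothness descent inequality applied to $\x^{k+1}=\x^k-\eta\m^k$,
\begin{equation*}
f(\x^{k+1})\leq f(\x^k) - \eta\langle \nabla f(\x^k),\m^k\rangle + \tfrac{L\eta^2}{2}\|\m^k\|^2,
\end{equation*}
sum it over $k=1,\ldots,T$, take expectations, and rearrange so that the cumulative sum of $\ell_1$ gradient norms appears on the useful side. The argument then decomposes into three technical pieces: handling the momentum mismatch, breaking the correlation between $\g^k$ and $1/\sqrt{\v^k}$, and closing a self-bounding argument to circumvent the absence of a bounded-gradient assumption.

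For the first piece, I would unroll $\m^k=(1-\theta)\sum_{j=1}^k\theta^{k-j}\g^j/\sqrt{\v^j}$, swap the order of summation, and replace $\nabla f(\x^k)$ by $\nabla f(\x^j)$ inside the inner product using smoothness; the mismatch $\nabla f(\x^k)-\nabla f(\x^j)$ is absorbed into a higher-order error controlled by $\eta\sum_{\ell=j}^{k-1}\|\m^\ell\|$, which I expect to be the origin of the $(1-\theta)^{-3/2}$ factor in $F$. For the second piece, for each coordinate $i$ I would introduce an $\F_{k-1}$-measurable surrogate $\tilde v_i^k$ built by substituting $(\nabla_i f(\x^k))^2+\sigma_i^2$ for $(g_i^k)^2$ in the EMA update, so that $\E_k[\g_i^k/\sqrt{\tilde v_i^k}\mid\F_{k-1}]=\nabla_i f(\x^k)/\sqrt{\tilde v_i^k}$, and control $|1/\sqrt{v_i^k}-1/\sqrt{\tilde v_i^k}|$ via the mean-value bound $|a^{-1/2}-b^{-1/2}|\leq|a-b|/(2\min(a,b)^{3/2})$ together with Assumption 3. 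The $\ell_1$ norm then emerges through a coordinate-wise estimate in the spirit of $a^2/(a+b)\geq a-b$, yielding $(\nabla_i f(\x^k))^2/\sqrt{\tilde v_i^k}\gtrsim|\nabla_i f(\x^k)|-(\text{per-coordinate noise})$; summing over $i$ and using $\sum_i\sigma_i\leq\sqrt{d}\sigma_s$ produces $\|\nabla f(\x^k)\|_1$ against an error of order $\sqrt{d}\sigma_s$, which is where the $\sqrt{d}$ factor in the main rate is born.

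The residual quadratic error $\sum_k\|\m^k\|^2$ I would bound coordinate-wise by the EMA identity $\sum_{k=1}^T(g_i^k)^2/v_i^k\leq(1-\beta)^{-1}\ln(v_i^T/(\beta^T v_i^0))$, which follows from $v_i^k=\beta v_i^{k-1}+(1-\beta)(g_i^k)^2$ combined with $1-x\leq-\ln x$. The hard part, and the main obstacle I expect, is that $v_i^T$ itself depends on $\sum_k(g_i^k)^2$, hence on the quantity being bounded; without a bounded-gradient hypothesis the right-hand side of the master inequality still contains a version of the left-hand side. I would close this loop by a self-bounding argument, deriving an implicit inequality of the shape $A\leq B+C\sqrt{A\ln A}$ for $A:=\frac{1}{T}\sum_k\E\|\nabla f(\x^k)\|_1$ and algebraically solving it; the $\max$ over several cases in (\ref{def-F}) is precisely the artifact of this quadratic-logarithmic inversion, while the hypotheses $\v_i^0\geq\lambda\max\{\sigma_i^2,1/(dT)\}$ and $T\geq e^2/\lambda$ are what keep $\ln(v_i^T/v_i^0)$ a benign polylog. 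Finally, inserting $\eta=\gamma/\sqrt{dT}$ and $\beta=1-1/T$ balances the two dominant contributions and yields the advertised $\sqrt{d}/T^{1/4}$ rate.
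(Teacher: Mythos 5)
Your skeleton matches the paper's in most respects: a smoothness descent inequality, unrolling the momentum (the paper does this more cleanly via the auxiliary sequence $\z^k=\frac{1}{1-\theta}\x^k-\frac{\theta}{1-\theta}\x^{k-1}$, which satisfies $\z^{k+1}=\z^k-\frac{\eta}{\sqrt{\v^k}}\odot\g^k$, rather than by swapping summations), the $\F_{k-1}$-measurable surrogate $\widetilde\v_i^k$ to decorrelate $\g^k$ from the preconditioner, the logarithmic bound $(1-\beta)\sum_t|\g_i^t|^2/\v_i^t\leq\ln(\v_i^K/(\beta^K\v_i^0))$ for the second-order terms, and a self-bounding/induction argument to handle unbounded gradients (the paper's circularity runs through $f(\z^k)-f^*$ appearing inside the logarithm and is closed by induction with the $c\ln x\leq x$ trick, which is the same idea as your implicit inequality, differently packaged).

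The genuine gap is the step that converts $\sum_i\E\bigl[|\nabla_i f(\x^k)|^2/\sqrt{\widetilde\v_i^k}\bigr]$ into $\|\nabla f(\x^k)\|_1$. Your proposed pointwise estimate $(\nabla_i f(\x^k))^2/\sqrt{\widetilde\v_i^k}\gtrsim|\nabla_i f(\x^k)|-(\text{per-coordinate noise of order }\sigma_i)$ fails because $\widetilde\v_i^k=\beta\v_i^{k-1}+(1-\beta)(|\nabla_i f(\x^k)|^2+\sigma_i^2)$ is dominated by $\beta\v_i^{k-1}$, the exponential moving average of \emph{all past squared stochastic gradients}; with $\beta=1-1/T$ this history term is not $O(|\nabla_i f(\x^k)|^2+\sigma_i^2)$ pointwise, and no bounded-gradient assumption is available to make it so. The paper instead applies Cauchy--Schwarz in the form
\begin{equation*}
\Bigl(\sum_{k,i}\E\bigl[|\nabla_i f(\x^k)|\bigr]\Bigr)^2\leq\Bigl(\sum_{k,i}\E\Bigl[\tfrac{|\nabla_i f(\x^k)|^2}{\sqrt{\widetilde\v_i^k}}\Bigr]\Bigr)\Bigl(\sum_{k,i}\E\bigl[\sqrt{\widetilde\v_i^k}\bigr]\Bigr),
\end{equation*}
and then needs a dedicated argument (its Lemma 6) showing $\sum_{k,i}\E[\sqrt{\widetilde\v_i^k}]\leq\max\{K\sqrt{d}\sigma_s,\sqrt{dT}\}+2\sum_{k,i}\E[|\nabla_i f(\x^k)|^2/\sqrt{\widetilde\v_i^k}]$, proved by peeling the EMA recursion one step at a time under conditional expectation, using concavity of $\sqrt{\cdot}$ and Assumptions 2--3 so that each $|\g_i^t|^2$ is replaced by $|\nabla_i f(\x^t)|^2+\sigma_i^2$ and the accumulated noise telescopes into a single $(1-\beta^k)\sigma_i^2$ rather than a sum of $k$ variances. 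This is precisely where the leading $T\sqrt{d}\sigma_s$ term (hence the $\frac{\sqrt{d}}{T^{1/4}}\sqrt{2F\sigma_s/\gamma}$ rate with the optimal $\sigma_s$-dependence) comes from; without it, or an equivalent in-expectation control of $\sum_{k,i}\E[\sqrt{\widetilde\v_i^k}]$, your argument does not reach the stated bound. A secondary, smaller point: your self-bounding variable $A=\frac{1}{T}\sum_k\E\|\nabla f(\x^k)\|_1$ is not the quantity that actually recurs on the right-hand side; the recursion is through $\E[f(\z^k)]-f^*$ and $\sum_{t,i}\E[|\g_i^t|^2/\v_i^t]$ inside a logarithm, and closing it at the level of those two quantities (by induction over $k$) is what produces the specific $\max$ defining $F$ in (\ref{def-F}).
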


Since $\theta$ is a constant independent of $T$ and $d$, we can simplify Theorem \ref{theorem} in the following corollary.
\begin{corollary}\label{corollary1}
Under the settings of Theorem \ref{theorem}, letting $\gamma=\sqrt{\frac{f(\x^1)-f^*}{L}}$ and $T\geq\frac{\sigma_s^2}{\lambda L(f(\x^1)-f^*)}$, we have $\frac{F}{\gamma}=\widetilde\bO\left(\sqrt{L(f(\x^1)-f^*)}\right)$ and
\begin{eqnarray}
\begin{aligned}\notag
\frac{1}{T}\sum_{k=1}^T\E\left[\|\nabla f(\x^k)\|_1\right]\leq&\widetilde\bO\left(\frac{d^{1/4}}{T^{1/4}}\left(\sqrt[4]{\|\bsigma\|_1^2L(f(\x^1)-f^*)}\right) + \frac{\sqrt{d}}{\sqrt{T}}\left(\sqrt{L(f(\x^1)-f^*)}\right) \right)\\
\leq&\widetilde\bO\left(\frac{\sqrt{d}}{T^{1/4}}\left(\sqrt[4]{\sigma_s^2L(f(\x^1)-f^*)}\right) + \frac{\sqrt{d}}{\sqrt{T}}\left(\sqrt{L(f(\x^1)-f^*)}\right) \right),
\end{aligned}
\end{eqnarray}
where $\widetilde\bO$ hides $\ln(L(f(\x^1)-f^*))$ and $\ln\left(\frac{\sqrt{L(f(\x^1)-f^*)}}{\lambda\max\{d\min_i\sigma_i^2,\frac{1}{T}\}}+\frac{1}{\lambda}\right)$. If $T\geq \frac{dL(f(\x^1)-f^*)}{\|\bsigma\|_1^2}$, the first term dominates and the convergence rate becomes
\begin{eqnarray}
\begin{aligned}\label{our-rate1}
\frac{1}{T}\sum_{k=1}^T\E\left[\|\nabla f(\x^k)\|_1\right]\leq&\widetilde\bO\left(\frac{d^{1/4}}{T^{1/4}}\left(\sqrt[4]{\|\bsigma\|_1^2L(f(\x^1)-f^*)}\right)\right)\\
\leq&\widetilde\bO\left(\frac{\sqrt{d}}{T^{1/4}}\left(\sqrt[4]{\sigma_s^2L(f(\x^1)-f^*)}\right)\right).
\end{aligned}
\end{eqnarray}
On the other hand, if the noise variance is small, that is, $\|\bsigma\|_1^2\leq\frac{dL(f(\x^1)-f^*)}{T}$, the second term dominates and the convergence rate becomes
\begin{eqnarray}
\begin{aligned}\notag
\frac{1}{T}\sum_{k=1}^T\E\left[\|\nabla f(\x^k)\|_1\right]\leq\widetilde\bO\left( \frac{\sqrt{d}}{\sqrt{T}}\sqrt{L(f(\x^1)-f^*)} \right).
\end{aligned}
\end{eqnarray}
In deep learning with extremely large $d$, it can be expected that $d\min_i\sigma_i^2>\frac{1}{T}$, making it unlikely for $\ln T$ to appear in $\widetilde\bO$.
\end{corollary}

\textbf{Tightness with respect to the coefficients}. \citet{Arjevani-2023-mp} established the lower bound of stochastic optimization methods under the assumptions of smoothness and bounded noise variance. The convergence rate of SGD in (\ref{SGD-rate}) matches this lower bound. By comparing our convergence rate (\ref{our-rate1}) with (\ref{SGD-rate}), we observe that our convergence rate is also tight up to logarithmic factors with respect to the smoothness coefficient $L$, the initial function value gap $f(\x^1)-f^*$, the noise variance $\sigma_s$, and the iteration number $T$. The only coefficient left unclear whether it is tight measured by $\ell_1$ norm is the dimension $d$.

\textbf{Comparison to SGD}. Our convergence rate (\ref{our-rate1}) can be considered to be analogous to (\ref{SGD-rate}) of SGD in the ideal case of $\|\nabla f(\x)\|_1=\varTheta(\sqrt{d})\|\nabla f(\x)\|_2$. Fortunately, we have empirically observed that the relationship $\|\nabla f(\x)\|_1=\varTheta(\sqrt{d})\|\nabla f(\x)\|_2$ holds true in common deep neural networks, as shown in Figures \ref{figure1}. On the other hand, our theory relies on slightly stronger assumption of coordinate-wise bounded noise variance compared to SGD. Consequently, the constant $\sigma_s=\sqrt{\sum_i \sigma_i^2}$ is larger than the one used in SGD. Nonetheless, if each $\E_k\left[\|\g_i^k-\nabla_i f(\x^k)\|^2|\F_{k-1}\right]$ does not oscillate intensely during iterations, we may expect the two constants not to differ greatly.

\textbf{Two key points in our proof}. To establish the tight dependence on $L(f(\x^1)-f^*)$, we should upper bound $\sum_{i=1}^d\sum_{k=1}^T \E_{\F_{k-1}}\left[\sqrt{\widetilde\v_i^k}\right]$ (see the definition in (\ref{wide-v-define})) by $\sigma_s$ (or $\|\bsigma\|_1$) predominantly instead of $L(f(\x^1)-f^*)$. To address this issue, we provide a simple proof in Lemma \ref{lemma6} to bound $\sum_{i=1}^d\sum_{k=1}^T \E_{\F_{k-1}}\left[\sqrt{\widetilde\v_i^k}\right]$ by $\bO\left(T\|\bsigma\|_1+\frac{F}{\gamma}\sqrt{dT}\right)$, with the first term dominating. Additionally, to ensure the tight dependence on $\sigma_s$, we give a sharper upper bound for the error term in Lemma \ref{lemma3}, such that $\frac{F}{\gamma}$ in (\ref{def-F}) includes $\frac{\sigma_s}{\sqrt{\lambda T}}$ instead of just $\sigma_s$, where the former can be relaxed as $\frac{\sigma_s}{\sqrt{\lambda T}}\leq\sqrt{L(f(\x^1)-f^*)}$ by setting $T\geq\frac{\sigma_s^2}{\lambda L(f(\x^1)-f^*)}$.

\textbf{$\ell_1$ norm or $\ell_2$ norm}. The choice to utilize the $\ell_1$ norm is motivated by SignSGD \citep{Bernstein-2018-icml}, which is closely related to Adam \citep{Balles-2018-icml}. Technically, if we were to use the $\ell_2$ norm as conventionally done, we would need to make the right hand side of the convergence rate criteria independent of $d$ while remaining the other coefficients tight, as shown in (\ref{SGD-rate}), to make it no slower than SGD. However, achieving this target presents a challenge with current techniques. Instead, by using the $\ell_1$ norm, we can maintain the term $\sqrt{d}$ on the right hand side, as demonstrated in (\ref{our-rate1}), since $\|\nabla f(\x)\|_1=\varTheta(\sqrt{d})\|\nabla f(\x)\|_2$ in the ideal case.

\textbf{Relation to AdaGrad}. From the parameter settings in Corollary \ref{corollary1}, we have $\eta=\frac{\gamma}{\sqrt{dT}}$, $1-\beta=\frac{1}{T}$, $\v_i^0\geq\lambda\sigma_i^2\geq \varOmega(\frac{\sigma_i^2}{T})$, and $\frac{1}{e^2}\leq \beta^t\leq 1$ for any $t\leq T$ from (\ref{l3-equ3}). So for the update direction of $\x_i^k$, we have
\begin{eqnarray}
\begin{aligned}\notag
\eta\frac{\g_i^k}{\sqrt{\v_i^k}}=\eta\frac{\g_i^k}{\sqrt{ \beta^k\v_i^0 + (1-\beta)\sum_{t=1}^k\beta^{k-t}|\g_i^t|^2 }}\approx\frac{\gamma}{\sqrt{dT}}\frac{\g_i^k}{\sqrt{ \v_i^0+\frac{\sum_{t=1}^k|\g_i^t|^2}{T} }}\approx\frac{\gamma}{\sqrt{d}}\frac{\g_i^k}{\sqrt{\sigma_i^2+\sum_{t=1}^k|\g_i^t|^2 }}.
\end{aligned}
\end{eqnarray}
On the other hand, the update direction of $\x_i^k$ in AdaGrad is $\eta\frac{\g_i^k}{\sqrt{ \sum_{t=1}^k|\g_i^t|^2 }+\varepsilon}$. So in our parameter settings, RMSProp can be regarded as a refined variant of AdaGrad.


\section{Literature Comparisons}\label{lit-review}

It is not easy to compare convergence rates in the literature due to variations in assumptions. Furthermore, most literature does not state explicit dependence on the dimension in their theorems, and instead hides it within the proofs. In this section, we attempt to compare our convergence rate with the representative ones in the literature. Particularly, we primarily compare with the ones without the bounded gradient assumption. 

\subsection{Convergence Rate of AdaGrad in \citep{hong-2024-adagrad}}
\citet[Corollay 1]{hong-2024-adagrad} studied AdaGrad under the relaxed noise assumption of $\|\g^k-\nabla f(\x^k)\|^2\leq A(f(\x^k)-f^*)+B\|\nabla f(\x^k)\|^2+C$ with probability 1, and their result can be extended to the sub-Gaussian assumption where $\E\left[\exp(\frac{\|\g^k-\nabla f(\x^k)\|^2}{A(f(\x^k)-f^*)+B\|\nabla f(\x^k)\|^2+C})\right]\leq e$. We compare with their convergence rate by setting $C=\sigma_s^2$ and $A=B=0$. They proved the following convergence rate with high probability,
\begin{eqnarray}
\begin{aligned}\notag
&\frac{1}{T}\sum_{k=1}^T\|\nabla f(\x^k)\|_2^2\leq\bO\left(\triangle_1\left( \frac{\triangle_1+\sqrt{L\eta\triangle_1}}{T} + \frac{\sigma_s}{\sqrt{T}} \right)\right)=\bO\left(\frac{\sigma_s\triangle_1}{\sqrt{T}} \right),\\
&\mbox{where }\triangle_1=\bO\left(\frac{f(\x^1)-f^*}{\eta}+d\sigma_s \ln T + L\eta d^2\ln^2T\right).
\end{aligned}
\end{eqnarray}
$\triangle_1$ is minimized to be $\widetilde\bO\left(\left(\sqrt{L(f(\x^1)-f^*)}+\sigma_s\right)d\ln T\right)$ by letting $\eta=\sqrt{\frac{f(\x^1)-f^*}{L}}\frac{1}{d\ln T}$. Accordingly, their convergence rate is
\begin{eqnarray}
\begin{aligned}\notag
&\frac{1}{T}\sum_{k=1}^T\|\nabla f(\x^k)\|_2\leq\bO\left( \frac{\sqrt{d\ln T}}{T^{1/4}}\left(\sqrt[4]{\sigma_s^2L(f(\x^1)-f^*)}+\sigma_s\right) \right),
\end{aligned}
\end{eqnarray}
which is $\sqrt{d\ln T}$ times slower than (\ref{SGD-rate}) of SGD. It is also inferior to our convergence rate (\ref{our-rate1}) due to $\|\x\|_2\ll\|\x\|_1\leq\sqrt{d}\|\x\|_2$. Additionally, their dependence on $\sigma_s$ is not optimal, especially when $\sigma_s\geq\sqrt{L(f(\x^1)-f^*)}$. Hong and Lin also studied Adam in \citep{hong-2024-adam,JunhongLin-2023}, but the convergence rate is not superior to that of Adagrad. It should be noted that \citet{hong-2024-adagrad} established this result based on the assumption that $\|\g^k-\nabla f(\x^k)\|^2\leq \sigma_s^2$ with probability 1, or the sub-Gaussian assumption of $\E\left[\exp(\frac{\|\g^k-\nabla f(\x^k)\|^2}{\sigma_s^2})\right]\leq e$. In contrast, our assumption is $\E_k\left[\|\g_i^k-\nabla_i f(\x^k)\|^2\big|\F_{k-1}\right]\leq\sigma_i^2$ in the coordinate-wise manner. Determining which assumption is stronger is difficult.

\subsection{Convergence Rate of AdaGrad in \citep{Liu-2023-icml}}
\citet[Theorem 4.6]{Liu-2023-icml} studied AdaGrad under the coordinate-wise sub-Gaussian assumption of $\E\hspace*{-0.06cm}\left[\exp(\lambda^2|\g_i\hspace*{-0.06cm}-\hspace*{-0.06cm}\nabla_i f(\x)|^2)\right]\hspace*{-0.12cm}\leq\exp(\lambda^2\sigma_i^2),\forall|\lambda|\leq\frac{1}{\sigma_i} $. \citet{Liu-2023-icml} also used $\ell_1$ norm to measure the convergence rate. Specifically, from Theorem 4.6 and the corresponding proof on page 42 in \citep{Liu-2023-icml}, they proved the following convergence rate with probability at least $1-\delta$,
\begin{eqnarray}
\begin{aligned}\notag
&\frac{1}{T}\sum_{k=1}^T\|\nabla f(\x^k)\|_1^2\leq g(\delta)\bO\left( \frac{\|\bsigma\|_1}{\sqrt{T}}+\frac{r(\delta)}{T} \right),\\
&\mbox{where }g(\delta)=\bO\left( \frac{f(\x^1)\hspace*{-0.06cm}-\hspace*{-0.06cm}f^*}{\eta} \hspace*{-0.06cm}+\hspace*{-0.06cm} \left(d\|\bsigma\|_{\infty}\hspace*{-0.06cm}+\hspace*{-0.06cm}\sum_{i=1}^dc_i(\delta)\right)\sqrt{\log\frac{dT}{\delta}} \hspace*{-0.06cm}+\hspace*{-0.06cm} dL\eta\log \left(\|\bsigma\|_1\sqrt{T}\hspace*{-0.06cm}+\hspace*{-0.06cm}r(\delta)\right) \right),\\ &\hspace*{1.1cm}c_i(\delta)=\bO\left(\sigma_i^3\log\frac{d}{\delta}+\sigma_i\log\left(1+\sigma_i^2T+\sigma_i^2\log\frac{d}{\delta}\right)+\|\bsigma\|_1\log(\|\bsigma\|_1\sqrt{T}+r(\delta))\right),\\
&\hspace*{1.1cm}r(\delta)=\bO\left(f(\x^1)-f^*+\|\bsigma^2\|_1\log\frac{d}{\delta}+\|\bsigma\|_1\sqrt{\log\frac{d}{\delta}}+Ld\log L\right).
\end{aligned}
\end{eqnarray}
It is not easy to simplify the above convergence rate and the dependence on $\bsigma$ is not optimal due to the second term in $g(\delta)$. Ignoring $\sum_{i=1}^dc_i(\delta)$ and the logarithmic term in $g(\delta)$, their $\frac{1}{T}\sum_{k=1}^T\|\nabla f(\x^k)\|_1$ is upper bounded by a constant not less than $\frac{\sqrt{d\|\bsigma\|_{\infty}\|\bsigma\|_1}+\sqrt[4]{\|\bsigma\|_1^2dL(f(\x^1)-f^*)}}{T^{1/4}}$, which is inferior to our convergence rate (\ref{our-rate1}).

\subsection{Convergence Rate of RMSProp in \citep{luo-2020-iclr}}
\citet[Theorem 4.3]{luo-2020-iclr} studied problem $\min_{\x} f(\x)=\sum_{j=0}^{n-1} f_j(\x)$ and they assumed $\sum_{j=0}^{n-1}\|\nabla f_j(\x)\|_2^2\leq D_1\|\nabla f(\x)\|_2^2+D_0$. They did not give the explicit dependence on the dimension in their theorem 4.3 and we try to recover it from their proof. On page 37 in \citep{luo-2020-iclr}, the authors gave
\begin{eqnarray}
\begin{aligned}\notag
&\min_{k\in[t_{init},T]}\min\left\{\|\nabla f(\x^{k,0})\|_1,\|\nabla f(\x^{k,0})\|_2^2\sqrt{\frac{D_1d}{D_0}}\right\}\leq \frac{Q_{1,3}+Q_{2,3}\log T}{\sqrt{T}-\sqrt{t_{init}-1}} +\sqrt{D_0}Q_{3,3},\\
&\mbox{where } Q_{1,3}=\frac{f(\x^{t_{init,0}})-f^*-C_6\log t_{init}}{2\eta_1}\sqrt{10dnD_1}d,\quad Q_{2,3}=\frac{C_6\sqrt{10dnD_1}d}{2\eta_1},\\
&\hspace*{1.1cm} C_6=L\eta_1^2\left(\frac{nd}{2(1-\beta_2)}+\frac{C_4\sqrt{d}}{n\sqrt{1-\beta_2}}\right),\quad C_4\geq\frac{dn^2}{(1-\beta_2)^{1.5}},\quad 1-\beta_2\leq\bO\left(\frac{1}{n^{3.5}}\right),
\end{aligned}
\end{eqnarray}
and the other notations can be found in their proof. Since
\begin{eqnarray}
\begin{aligned}\notag
&Q_{1,3}+Q_{2,3}\log T\geq \varOmega\left(\left(\frac{f(\x^{t_{init,0}})-f^*}{\eta_1}+\frac{L\eta_1nd^{1.5}}{(1-\beta_2)^2}(\log T-\log t_{init})\right)\sqrt{dnD_1}d\right)\\
&\geq \varOmega\left(\sqrt{D_1 L(f(\x^{t_{init,0}})-f^*)}d^{9/4}\right).
\end{aligned}
\end{eqnarray}
We see that their $\min_k\|\nabla f(\x^{k,0})\|_2^2$ is upper bounded by a constant not less than $\widetilde\varTheta\Big(\frac{D_0Q_{3,3}}{\sqrt{dD_1}}+\frac{d^{7/4}\sqrt{D_0 L(f(\x^{t_{init,0}})-f^*)}}{\sqrt{T}}\Big)$. That is, $\min_k\|\nabla f(\x^{k,0})\|_2$ is upper bounded by a constant not less than $\widetilde\bO\Big(\frac{\sqrt{D_0Q_{3,3}}}{\sqrt[4]{dD_1}}+\frac{d^{7/8}\sqrt[4]{D_0 L(f(\x^{t_{init,0}})-f^*)}}{T^{1/4}}\Big)$, which is at least $d^{3/8}$ times slower than our convergence rate (\ref{our-rate1}).

\subsection{Convergence Rate of RMSProp in \citep{bottou-2022-tmlr}}

\citet[Theorem 2]{bottou-2022-tmlr} studied the convergence rate of RMSProp under the bounded stochastic gradient assumption, that is, there is $R\geq\sqrt{\varepsilon}$ so that $\|\g^k\|_{\infty}\leq R-\sqrt{\varepsilon}$ almost surely. They proved the following bound for RMSProp
\begin{eqnarray}
\begin{aligned}\notag
\E\left[\|\nabla f(\x^{\tau})\|_2^2\right]\leq \bO\left( R\frac{f(\x^1)-f^*}{\eta T} + \left(\frac{dR^2}{\sqrt{1-\beta}}+\frac{\eta d RL}{1-\beta}\right)\left(\frac{1}{T}\ln\left(1+\frac{R^2}{(1-\beta)\varepsilon}\right)-\ln\beta\right) \right).
\end{aligned}
\end{eqnarray}
Letting $\beta=1-\frac{1}{T}$ and $\eta=\frac{1}{\sqrt{dT}}\sqrt{\frac{f(\x^1)-f^*}{L}}$, the above convergence rate can be simplified to 
\begin{eqnarray}
\begin{aligned}\notag
\E\left[\|\nabla f(\x^{\tau})\|_2^2\right]\leq \bO\left(\left(\frac{\sqrt{d}}{\sqrt{T}}R\sqrt{L(f(\x^1)-f^*)} + \frac{dR^2}{\sqrt{T}}\right)\ln(RT)\right),
\end{aligned}
\end{eqnarray}
where $dR^2\geq \|\g^k\|_2^2,\forall k$. Due to the bounded stochastic gradient assumption, comparing their convergence rate with ours is unfair. Our proof follows the analytical framework in \citep{bottou-2022-tmlr}. However, unlike their work, we cannot lower bound $\frac{|\nabla_i f(\x^k)|^2}{\sqrt{\widetilde \v_i^k}}$ by $\frac{|\nabla_i f(\x^k)|^2}{R}$ without the bounded stochastic gradient assumption. Instead, we use 
\begin{eqnarray}
\begin{aligned}\notag
\left(\sum_{k=1}^K\E\left[\|\nabla f(\x^k)\|_1\right]\right)^2\leq \left(\sum_{k=1}^K\sum_{i=1}^d\E\left[\frac{|\nabla_i f(\x^k)|^2}{\sqrt{\widetilde \v_i^k}}\right]\right)\left(\sum_{k=1}^K\sum_{i=1}^d\E\left[\sqrt{\widetilde\v_i^k}\right]\right)
\end{aligned}
\end{eqnarray}
and rigorously derive a tight upper bound for $\sum_{k=1}^K\sum_{i=1}^d\E\left[\sqrt{\widetilde\v_i^k}\right]$. Furthermore, in the absence of the bounded stochastic gradient assumption, we cannot use $\v_i^k\leq R^2$ to upper bound $\sum_{k=1}^K\E\left[\frac{|\g_i^k|^2}{\v_i^k}\right]$. To address this, we employ mathematical induction to establish the bound in (\ref{equ11}). Additionally, we provide a sharper upper bound for the error term in Lemma \ref{lemma3} to ensure the tight dependence on $\sigma_s$.

\subsection{Convergence Rate of Adam in \citep{haochuanli-2023}}
\citet[Theorem 4.1]{haochuanli-2023} introduced a new proof of boundedness of gradients along the optimization trajectory. Although their Theorem 4.1 has no explicit dependence on $d$, it has a higher dependence on $L(f(\x^1)-f^*)$, $\sigma_s$ and the constant $\lambda$ as a compromise, where $\lambda$ appears in the adaptive step-size $\frac{\eta}{\sqrt{\v^k}+\lambda}$, which is usually small in practice, for example, $\lambda=10^{-8}$ in PyTorch implementation. Specifically, they assumed $\|\g^k-\nabla f(\x^k)\|\leq \sigma_s$ with probability 1 and proved $\frac{1}{T}\sum_{k=1}^T\|\nabla f(\x^k)\|_2^2\leq\epsilon^2$ with high probability by letting
\begin{eqnarray}
\begin{aligned}\notag
&T=\max\left\{\frac{1}{\beta^2},\frac{G(f(\x^1)-f^*)}{\eta\epsilon^2}\right\},\quad \eta\leq\min\left\{\frac{\sigma_s\lambda\beta}{LG},\frac{\lambda^{3/2}\beta}{L\sqrt{G}}\right\},\quad \beta\leq\bO\left( \frac{\lambda\epsilon^2}{\sigma_s^2G}\right),
\end{aligned}
\end{eqnarray}
and $G$ to be a large constant satisfying $G\geq \max\{\lambda,\sigma_s,\sqrt{L(f(\x^1)-f^*)}\}$. From their setting, we see that $T\geq \frac{G^{2.5} \sigma_s^2L(f(\x^1)-f^*)}{\lambda^{2.5}\epsilon^4}$. Consequently, their $\frac{1}{T}\sum_{k=1}^T\|\nabla f(\x^k)\|_2$ is upper bounded by a constant not less than $(\frac{G}{\lambda})^{5/8}\frac{\sqrt[4]{\sigma_s^2L(f(\x^1)-f^*)}}{T^{1/4}}$, which is at least $(\frac{G}{\lambda})^{5/8}$ times slower than SGD. It is not easy to compare with our convergence rate (\ref{our-rate1}) due to different measurement. When $\|\nabla f(\x)\|_1\geq\varOmega((\frac{\lambda}{G})^{5/8}\sqrt{d})\|\nabla f(\x)\|_2$, our convergence rate is superior, and in the ideal case of $\|\nabla f(\x)\|_1=\varTheta(\sqrt{d})\|\nabla f(\x)\|_2$, our convergence rate is also $(\frac{G}{\lambda})^{5/8}$ times faster.

\subsection{Other works}

There are other literature that analyze adaptive gradient methods, including \citep{Bottou-2020-jmlr,Kavis-2022-iclr,Faw-2022-colt,WeiChen-2023-colt,Attia-2023-icml} for AdaGrad-norm, \citep{WeiChen-2023-colt} for AdaGrad, \citep{Zou-2019-cvpr,bottou-2022-tmlr} for RMSProp, \citep{Reddi-2018-iclr,Zou-2019-cvpr,bottou-2022-tmlr,tianbaoyang-2021,Chen-2022-jmlr,luo-2022-nips,Wang-2023-nips,JunhongLin-2023,hong-2024-adam,Qi-Zhang-2024-rmsporp} for Adam, and \citep{Zaheer-2018-nips,Loshchilov-2018-iclr,chen-2019-iclr,luo-2019-iclr,You-2019-iclr,Zhuang-2020-nips,Chen-2021-ijcai,Savarese-2021-cvpr,Crawshaw-2022,xie-2022} for other variants. However, none have established a convergence rate comparable to that of SGD.

\section{Proof of Theorem \ref{theorem}}
Denote $\x^0=\x^1$, which corresponds to $\m^0=0$. The second and third steps of Algorithm \ref{momentum} can be rewritten in the heavy-ball style equivalently as follows,
\begin{eqnarray}
\begin{aligned}\label{x-update}
\x^{k+1}=\x^k-\frac{\eta(1-\theta)}{\sqrt{\v^k}}\odot \g^k+\theta(\x^k-\x^{k-1}),\forall k\geq 1,
\end{aligned}
\end{eqnarray}
which leads to
\begin{eqnarray}
\begin{aligned}\notag
\x^{k+1}-\theta\x^k=\x^k-\theta\x^{k-1}-\frac{\eta(1-\theta)}{\sqrt{\v^k}}\odot \g^k.
\end{aligned}
\end{eqnarray}
We follow \citep{Yin-2020-nips} to define
\begin{eqnarray}
\begin{aligned}\label{z-define}
\z^k=\frac{1}{1-\theta}\x^k-\frac{\theta}{1-\theta}\x^{k-1},\forall k\geq 1.
\end{aligned}
\end{eqnarray}
Specially, we have $\z^1=\x^1$ since $\x^1=\x^0$. Thus, we have
\begin{eqnarray}
\begin{aligned}\label{z-update}
\z^{k+1}=\z^k-\frac{\eta}{\sqrt{\v^k}}\odot \g^k,
\end{aligned}
\end{eqnarray}
and
\begin{eqnarray}
\begin{aligned}\label{diff-z-x}
\z^k-\x^k=\frac{\theta}{1-\theta}(\x^k-\x^{k-1}).
\end{aligned}
\end{eqnarray}
We follow \citep{bottou-2022-tmlr,Faw-2022-colt} to define
\begin{equation}\label{wide-v-define}
\widetilde \v_i^k=\beta \v_i^{k-1}+(1-\beta)\left(|\nabla_i f(\x^k)|^2+\sigma_i^2\right).
\end{equation}
Then with the supporting lemmas in Section \ref{sec-supporting-lemmas} we can prove Theorem \ref{theorem}.

\begin{proof}
As the gradient is $L$-Lipschitz, we have
\begin{eqnarray}
\begin{aligned}\notag
\E_k\left[f(\z^{k+1})\big|\F_{k-1}\right]-f(\z^k)\leq& \E_k\left[\<\nabla f(\z^k),\z^{k+1}-\z^k\>+\frac{L}{2}\|\z^{k+1}-\z^k\|^2\Big|\F_{k-1}\right]\\
=& \E_k\left[-\eta\sum_{i=1}^d\<\nabla_i f(\z^k),\frac{\g_i^k}{\sqrt{\v_i^k}}\>+\frac{L\eta^2}{2}\sum_{i=1}^d\frac{|\g_i^k|^2}{\v_i^k}\Big|\F_{k-1}\right],
\end{aligned}
\end{eqnarray}
where we use (\ref{z-update}). Decomposing the first term into
\begin{eqnarray}
\begin{aligned}\notag
&-\<\nabla_i f(\x^k),\frac{\g_i^k}{\sqrt{\widetilde\v_i^k}}\>+\<\nabla_i f(\x^k),\frac{\g_i^k}{\sqrt{\widetilde\v_i^k}}-\frac{\g_i^k}{\sqrt{\v_i^k}}\>+\<\nabla_i f(\x^k)-\nabla_i f(\z^k),\frac{\g_i^k}{\sqrt{\v_i^k}}\>
\end{aligned}
\end{eqnarray}
and using Assumption 2, we have
\begin{eqnarray}
\begin{aligned}\label{equ0}
\E_k\left[f(\z^{k+1})\big|\F_{k-1}\right]-f(\z^k)\leq& -\eta\sum_{i=1}^d\frac{|\nabla_i f(\x^k)|^2}{\sqrt{\widetilde\v_i^k}}+\frac{L\eta^2}{2}\sum_{i=1}^d\E_k\left[\frac{|\g_i^k|^2}{\v_i^k}\Big|\F_{k-1}\right]\\
&+\underbrace{\eta\sum_{i=1}^d\E_k\left[\<\nabla_i f(\x^k),\frac{\g_i^k}{\sqrt{\widetilde\v_i^k}}-\frac{\g_i^k}{\sqrt{\v_i^k}}\>\Big|\F_{k-1}\right]}_{\text{\rm term (a)}}\\
&+\underbrace{\eta\sum_{i=1}^d\E_k\left[\<\nabla_i f(\x^k)-\nabla_i f(\z^k),\frac{\g_i^k}{\sqrt{\v_i^k}}\>\Big|\F_{k-1}\right]}_{\text{\rm term (b)}}.
\end{aligned}
\end{eqnarray}
We can use Lemma \ref{lemma3} to bound term (a). For term (b), we have
\begin{eqnarray}
\hspace*{-2cm}\begin{aligned}\notag
&\eta\sum_{i=1}^d\<\nabla_i f(\x^k)-\nabla_i f(\z^k),\frac{\g_i^k}{\sqrt{\v_i^k}}\>\\
\leq&\frac{(1-\theta)^{0.5}}{2L\theta^{0.5}}\sum_{i=1}^d|\nabla_i f(\x^k)-\nabla_i f(\z^k)|^2+\frac{L\theta^{0.5}\eta^2}{2(1-\theta)^{0.5}}\sum_{i=1}^d\frac{|\g_i^k|^2}{\v_i^k}\\
=&\frac{(1-\theta)^{0.5}}{2L\theta^{0.5}}\|\nabla f(\x^k)-\nabla f(\z^k)\|^2+\frac{L\theta^{0.5}\eta^2}{2(1-\theta)^{0.5}}\sum_{i=1}^d\frac{|\g_i^k|^2}{\v_i^k}\\
\overset{(1)}\leq&\frac{L\theta^{1.5}\eta^2}{2(1-\theta)^{0.5}}\sum_{t=1}^{k-1}\theta^{k-1-t}\sum_{i=1}^d\frac{|\g_i^t|^2}{\v_i^t}+\frac{L\theta^{0.5}\eta^2}{2(1-\theta)^{0.5}}\sum_{i=1}^d\frac{|\g_i^k|^2}{\v_i^k}\\
=&\frac{L\sqrt{\theta}\eta^2}{2\sqrt{1-\theta}}\left(\sum_{t=1}^{k-1}\theta^{k-t}\sum_{i=1}^d\frac{|\g_i^t|^2}{\v_i^t}+\sum_{i=1}^d\frac{|\g_i^k|^2}{\v_i^k}\right)=\frac{L\sqrt{\theta}\eta^2}{2\sqrt{1-\theta}}\sum_{t=1}^k\theta^{k-t}\sum_{i=1}^d\frac{|\g_i^t|^2}{\v_i^t},
\end{aligned}
\end{eqnarray}
where we use Lemma \ref{lemma2} in $\overset{(1)}\leq$. Plugging the above inequality and Lemma \ref{lemma3} into (\ref{equ0}), taking expectation on $\F_{k-1}$, and rearranging the terms, we have
\begin{eqnarray}
\begin{aligned}\notag
&\E_{\F_k}\left[f(\z^{k+1})\right]-\E_{\F_{k-1}}\left[f(\z^k)\right]+\frac{\eta}{2}\sum_{i=1}^d\E_{\F_{k-1}}\left[\frac{|\nabla_i f(\x^k)|^2}{\sqrt{\widetilde\v_i^k}}\right]\\
\leq& \frac{2\eta e(1-\beta)}{\sqrt{\lambda}}\sum_{i=1}^d\sigma_i\E_{\F_k}\left[\frac{|\g_i^k|^2}{\v_i^k}\right]+\frac{L\eta^2}{\sqrt{1-\theta}}\sum_{t=1}^{k}\theta^{k-t}\sum_{i=1}^d\E_{\F_t}\left[\frac{|\g_i^t|^2}{\v_i^t}\right].
\end{aligned}
\end{eqnarray}
Summing over $k=1,\cdots,K$, we have
\begin{eqnarray}
\begin{aligned}\label{equ5-1}
&\E_{\F_K}\left[f(\z^{K+1})\right]-f^*+\frac{\eta}{2}\sum_{i=1}^d\sum_{k=1}^K\E_{\F_{k-1}}\left[\frac{|\nabla_i f(\x^k)|^2}{\sqrt{\widetilde\v_i^k}}\right]\hspace*{1cm}\\
\leq& f(\z^1)-f^*+\frac{2\eta e(1-\beta)}{\sqrt{\lambda}}\sum_{i=1}^d\sigma_i\sum_{k=1}^K\E_{\F_k}\left[\frac{|\g_i^k|^2}{\v_i^k}\right]+\frac{L\eta^2}{\sqrt{1-\theta}}\sum_{i=1}^d\sum_{k=1}^K\sum_{t=1}^{k}\theta^{k-t}\E_{\F_t}\left[\frac{|\g_i^t|^2}{\v_i^t}\right]\\
\leq& f(\z^1)-f^*+\frac{2\eta e(1-\beta)}{\sqrt{\lambda}}\underbrace{\sum_{i=1}^d\sigma_i\sum_{k=1}^K\E_{\F_k}\left[\frac{|\g_i^k|^2}{\v_i^k}\right]}_{\text{\rm term (c)}}+\frac{L\eta^2}{(1-\theta)^{1.5}}\underbrace{\sum_{i=1}^d\sum_{t=1}^K\E_{\F_t}\left[\frac{|\g_i^t|^2}{\v_i^t}\right]}_{\text{\rm term (d)}},
\end{aligned}
\end{eqnarray}
where we use
\begin{eqnarray}
\begin{aligned}\notag
&\sum_{i=1}^d\sum_{k=1}^K\sum_{t=1}^{k}\theta^{k-t}\E_{\F_t}\left[\frac{|\g_i^t|^2}{\v_i^t}\right]=\sum_{i=1}^d\sum_{t=1}^K\sum_{k=t}^{K}\theta^{k-t}\E_{\F_t}\left[\frac{|\g_i^t|^2}{\v_i^t}\right]\leq \frac{1}{1-\theta}\sum_{i=1}^d\sum_{t=1}^K\E_{\F_t}\left[\frac{|\g_i^t|^2}{\v_i^t}\right].
\end{aligned}
\end{eqnarray}
Using Lemmas \ref{lemma5} and \ref{lemma4} to bound terms (c) and (d), respectively, letting $\eta=\frac{\gamma}{\sqrt{dT}}$ and $\beta=1-\frac{1}{T}$, we have
\begin{eqnarray}
\begin{aligned}\label{equ8}
&\E_{\F_K}\left[f(\z^{K+1})\right]-f^*+\frac{\eta}{2}\sum_{i=1}^d\sum_{k=1}^K\E_{\F_{k-1}}\left[\frac{|\nabla_i f(\x^k)|^2}{\sqrt{\widetilde\v_i^k}}\right]\\
\leq& f(\z^1)-f^*+\hspace*{-0.07cm}\left(\hspace*{-0.07cm}\frac{2\eta e\sqrt{d}\sigma_s}{\sqrt{\lambda}}+\frac{L\eta^2d}{(1-\theta)^{1.5}(1-\beta)}\right)\hspace*{-0.08cm}\ln\Bigg(\frac{4Le^2(1-\beta)}{\lambda\max\{d\min_i\sigma_i^2,\frac{1}{T}\}}\E_{\F_K}\hspace*{-0.15cm}\left[ \sum_{k=1}^K(f(\z^k)-f^*)\right.\hspace*{-0.2cm}\\
&\hspace*{6cm}\left.+\frac{L\theta^2 \eta^2}{2(1-\theta)^2}\sum_{t=1}^{K-1}\sum_{i=1}^d\frac{|\g_i^t|^2}{\v_i^t} \right]+\frac{e^2}{\lambda}+e+1\Bigg)\\
=& f(\z^1)-f^*+\left(\frac{2e\gamma\sigma_s}{\sqrt{\lambda T}}+\frac{L\gamma^2}{(1-\theta)^{1.5}}\right)\ln\Bigg(\frac{4Le^2(1-\beta)}{\lambda\max\{d\min_i\sigma_i^2,\frac{1}{T}\}}\E_{\F_K}\left[ \sum_{k=1}^K(f(\z^k)-f^*)\right.\\
&\hspace*{6cm}\left.+\frac{L\theta^2 \gamma^2}{2(1-\theta)^2}\frac{1-\beta}{d}\sum_{t=1}^{K-1}\sum_{i=1}^d\frac{|\g_i^t|^2}{\v_i^t} \right]+\frac{e^2}{\lambda}+e+1\Bigg).
\end{aligned}
\end{eqnarray}
Next, we bound the right hand side of (\ref{equ8}) by the constant $F$ defined in (\ref{def-F}). Specifically, we will prove
\begin{eqnarray}
\begin{aligned}\label{equ11}
\E_{\F_{k-1}}\left[f(\z^k)\right]-f^*\leq F\mbox{ and }\frac{1-\beta}{d}\sum_{i=1}^d\sum_{t=1}^{k-1}\E_{\F_{k-1}}\left[\frac{|\g_i^t|^2}{\v_i^t}\right]\leq \frac{F}{L\gamma^2}
\end{aligned}
\end{eqnarray}
by induction. (\ref{equ11}) holds for $k=1$ from the definition of $F$ in (\ref{def-F}), $\x^1=\z^1$, and $\sum_{i=1}^d\sum_{t=1}^{k-1}\frac{|\g_i^t|^2}{\v_i^t}=0$ when $k=1$ given in Lemma \ref{lemma2}. Suppose that the two inequalities hold for all $k=1,2,\cdots,K$. Now, we consider $k=K+1$. From Lemma \ref{lemma4}, we have
\begin{eqnarray}
\begin{aligned}\label{equ14}
\frac{1-\beta}{d}\sum_{i=1}^d\sum_{t=1}^K\E_{\F_K}\left[\frac{|\g_i^t|^2}{\v_i^t}\right]\leq& \ln\Bigg(\frac{4Le^2(1-\beta)}{\lambda\max\{d\min_i\sigma_i^2,\frac{1}{T}\}}\E_{\F_K}\left[\sum_{k=1}^K(f(\z^k)-f^*)\right.\\
&\hspace*{0.7cm}\left.+\frac{L\theta^2 \gamma^2}{2(1-\theta)^2}\frac{1-\beta}{d}\sum_{i=1}^d\sum_{t=1}^{K-1}\frac{|\g_i^t|^2}{\v_i^t}\right]+\frac{e^2}{\lambda}+e+1\Bigg)\\
\leq& \ln\left(\frac{4Le^2(1-\beta)}{\lambda\max\{d\min_i\sigma_i^2,\frac{1}{T}\}}\left( KF+\frac{\theta^2}{2(1-\theta)^2}F\right)+\frac{e^2}{\lambda}+ e+1\right)\\
\leq& \ln\left(\frac{4L\gamma e^2}{\lambda\max\{d\min_i\sigma_i^2,\frac{1}{T}\}}\left(1+\frac{\theta^2}{2T(1-\theta)^2}\right)\frac{F}{\gamma}+\frac{12}{\lambda}\frac{F}{\gamma}\right)\\
\overset{(2)}\leq& \frac{F}{L\gamma^2},
\end{aligned}
\end{eqnarray}
where we use $(1-\beta)K=\frac{K}{T}\leq 1$ and let $\frac{F}{\gamma}\geq 1$. Inequality $\overset{(2)}\leq$ will be verified later. Using the similar proof to (\ref{equ14}), we derive from (\ref{equ8}) that
\begin{eqnarray}
\begin{aligned}\label{equ9}
&\E_{\F_K}\left[f(\z^{K+1})\right]-f^*+\frac{\eta}{2}\sum_{i=1}^d\sum_{k=1}^K\E_{\F_{k-1}}\left[\frac{|\nabla_i f(\x^k)|^2}{\sqrt{\widetilde\v_i^k}}\right]\\
\leq& f(\z^1)-f^*+\left(\frac{2e\gamma\sigma_s}{\sqrt{\lambda T}}+\frac{L\gamma^2}{(1-\theta)^{1.5}}\right)\ln\left(\frac{4L\gamma e^2}{\lambda\max\{d\min_i\sigma_i^2,\frac{1}{T}\}}\left(1+\frac{\theta^2}{2T(1-\theta)^2}\right)\frac{F}{\gamma}+\frac{12}{\lambda}\frac{F}{\gamma}\right)\\
\overset{(3)}\leq& F.
\end{aligned}
\end{eqnarray}
We construct $F$ for $\overset{(2)}\leq$ and $\overset{(3)}\leq$ to hold by letting
\begin{eqnarray}
\begin{aligned}\notag
&1\leq\frac{F}{\gamma},\hspace*{2cm} \ln\left(\frac{4L\gamma e^2}{\lambda\max\{d\min_i\sigma_i^2,\frac{1}{T}\}}\left(1+\frac{\theta^2}{2T(1-\theta)^2}\right)+\frac{12}{\lambda}\right)\leq \frac{F}{2L\gamma^2},\\
&\ln\frac{F}{\gamma}\leq \frac{F}{2L\gamma^2},\hspace*{1.3cm} f(\z^1)-f^*\leq\frac{F}{3},\hspace*{1.2cm}\left(\frac{2e\gamma\sigma_s}{\sqrt{\lambda T}}+\frac{L\gamma^2}{(1-\theta)^{1.5}}\right)\ln\frac{F}{\gamma}\leq \frac{F}{3},\\
&\left(\frac{2e\gamma\sigma_s}{\sqrt{\lambda T}}+\frac{L\gamma^2}{(1-\theta)^{1.5}}\right)\ln\left(\frac{4L\gamma e^2}{\lambda\max\{d\min_i\sigma_i^2,\frac{1}{T}\}}\left(1+\frac{\theta^2}{2T(1-\theta)^2}\right)+\frac{12}{\lambda}\right)\leq \frac{F}{3},
\end{aligned}
\end{eqnarray}
which are satisfied by setting of $F$ in (\ref{def-F}), where we use $\x^1=\z^1$ and $c\ln x\leq x$ for all $x\geq 3c\ln c$ and $c\geq 3$ proved in Appendix \ref{appendixB}. So (\ref{equ11}) also holds for $k=K+1$. Thus, (\ref{equ11}) holds for all $k=1,2,\cdots,T$ by induction.

Using Holder's inequality, Lemma \ref{lemma6}, and (\ref{equ9}), we have
\begin{eqnarray}
\begin{aligned}\notag
&\left(\sum_{k=1}^K\E_{\F_{k-1}}\left[\|\nabla f(\x^k)\|_1\right]\right)^2=\left(\sum_{k=1}^K\sum_{i=1}^d\E_{\F_{k-1}}\left[|\nabla_i f(\x^k)|\right]\right)^2\\
\leq& \left(\sum_{k=1}^K\sum_{i=1}^d\E_{\F_{k-1}}\left[\frac{|\nabla_i f(\x^k)|^2}{\sqrt{\widetilde\v_i^k}}\right]\right)\left(\sum_{k=1}^K\sum_{i=1}^d\E_{\F_{k-1}}\left[\sqrt{ \widetilde\v_i^k}\right]\right)\\
\leq& \left(\sum_{k=1}^K\sum_{i=1}^d\E_{\F_{k-1}}\left[\frac{|\nabla_i f(\x^k)|^2}{\sqrt{\widetilde\v_i^k}}\right]\right)\left( K\|\bsigma\|_1+\sqrt{dT}+2\sum_{k=1}^K\sum_{i=1}^d\E_{\F_{k-1}}\left[\frac{|\nabla_i f(\x^k)|^2}{\sqrt{\widetilde\v_i^k}}\right] \right)\\
\leq& \frac{2F}{\eta}\left( K\|\bsigma\|_1+\frac{F}{\eta} + \frac{4F}{\eta} \right)= \frac{2F}{\eta}\left( K\|\bsigma\|_1 + \frac{5F}{\eta} \right)
\end{aligned}
\end{eqnarray}
for all $K\leq T$, where we use $\eta=\frac{\gamma}{\sqrt{dT}}$ and $\frac{F}{\gamma}\geq 1$. So we have
\begin{eqnarray}
\begin{aligned}\notag
\frac{1}{T}\sum_{k=1}^T\E_{\F_{k-1}}\left[\|\nabla f(\x^k)\|_1\right]\leq \frac{1}{T}\left(\sqrt{\frac{2FT\|\bsigma\|_1}{\eta}} + \frac{4F}{\eta}\right)= \frac{d^{1/4}}{T^{1/4}}\sqrt{\frac{2F\|\bsigma\|_1}{\gamma}}+\frac{\sqrt{d}}{\sqrt{T}}\frac{4F}{\gamma}.
\end{aligned}
\end{eqnarray}
\end{proof}

\subsection{Supporting Lemmas}\label{sec-supporting-lemmas}
In this section, we give some technical lemmas that will be used in our analysis.
\begin{lemma}\label{lemma1}\citep{bottou-2022-tmlr}
Let $v_t=\beta v_{t-1}+(1-\beta)g_t^2$. Then we have
\begin{eqnarray}
\begin{aligned}\notag
&(1-\beta)\sum_{t=1}^k\frac{g_t^2}{v_t}\leq \ln\frac{v_k}{\beta^kv_0}.
\end{aligned}
\end{eqnarray}
\end{lemma}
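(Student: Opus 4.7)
The plan is to establish this bound purely algebraically, treating the recursion $v_t=\beta v_{t-1}+(1-\beta)g_t^2$ as an identity and combining it with the elementary scalar inequality $1-x\le -\ln x$ for $x>0$. No probabilistic structure is required; the claim is a deterministic per-sequence statement that just manipulates the recursion.

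First I would rewrite each summand on the left-hand side. Subtracting $\beta v_{t-1}$ from both sides of the recursion gives $(1-\beta)g_t^2=v_t-\beta v_{t-1}$, and dividing through by $v_t$ yields
$$
(1-\beta)\frac{g_t^2}{v_t}=1-\beta\frac{v_{t-1}}{v_t}.
$$
This expresses each summand as one minus a ratio, which is precisely the shape needed to convert the bound into a logarithm that telescopes.

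Second, I would apply $1-x\le -\ln x$ with $x=\beta v_{t-1}/v_t$. Positivity of $x$ is guaranteed because $v_0>0$ (the paper's algorithms initialize $v_i^0=\lambda\max\{\sigma_i^2,1/(dT)\}>0$), $\beta\in(0,1)$, and $g_t^2\ge 0$, so the recursion forces $v_t\ge\beta^t v_0>0$ and the logarithm is well defined. The inequality gives
$$
(1-\beta)\frac{g_t^2}{v_t}\le \ln v_t-\ln v_{t-1}-\ln\beta.
$$
Summing $t=1,\dots,k$ telescopes the first two terms into $\ln v_k-\ln v_0$, while the last contributes $-k\ln\beta$, yielding the right-hand side $\ln(v_k/(\beta^k v_0))$.

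The hard part, if any, is really just spotting the right scalar inequality; once one writes $(1-\beta)g_t^2=v_t-\beta v_{t-1}$ and applies $1-x\le\ln(1/x)$, the rest is a one-line telescoping sum. The only regularity concern is keeping every $v_t$ strictly positive so the logarithm is defined, which as noted is immediate from the positive initialization and the non-negativity of $g_t^2$.
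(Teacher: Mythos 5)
Your proof is correct and essentially identical to the paper's: the inequality $1-x\le-\ln x$ applied to $x=\beta v_{t-1}/v_t$ is the same elementary bound the paper writes as $\ln(1-y)\le -y$ with $y=(1-\beta)g_t^2/v_t$, followed by the same telescoping sum. Your added remark on positivity of $v_t$ is a harmless (and valid) bit of extra care.
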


The next lemma is motivated by \citep{bottou-2022-tmlr}. However, the key distinction is that following the proof in \citep{bottou-2022-tmlr}, we can only get $\sigma_i\sqrt{1-\beta}\E_k\left[\frac{|\g_i^k|^2}{\v_i^k}\Big|\F_{k-1}\right]$ in the last component of (\ref{equ15}), where $1-\beta=\frac{1}{T}$. We strengthen the constant from $\sqrt{1-\beta}$ to $1-\beta$, which is crucial to achieve the tight dependence on $\sigma_s$ in our theory.
\begin{lemma}\label{lemma3}
Suppose that Assumption 3 holds. Define $\widetilde\v_i^k$ as in (\ref{wide-v-define}). Let $\v_i^0=\lambda\max\{\sigma_i^2,\frac{1}{dT}\}$, $\beta=1-\frac{1}{T}$, and $T\geq \frac{e^2}{\lambda}\geq 2$. Then we have
\begin{eqnarray}
\begin{aligned}\label{equ15}
&\E_k\left[\<\nabla_i f(\x^k),\frac{\g_i^k}{\sqrt{\widetilde\v_i^k}}-\frac{\g_i^k}{\sqrt{\v_i^k}}\>\Big|\F_{k-1}\right]\leq \frac{|\nabla_i f(\x^k)|^2}{2\sqrt{\widetilde\v_i^k}}+\frac{2\sigma_ie(1-\beta)}{\sqrt{\lambda}}\E_k\left[\frac{|\g_i^k|^2}{\v_i^k}\Big|\F_{k-1}\right].
\end{aligned}
\end{eqnarray}
\end{lemma}
\begin{proof}
From the definition of $\widetilde\v_i^k$, the recursion of $\v_i^k$, and the setting of $\v_i^0$, we have
\begin{eqnarray}
\begin{aligned}\label{l3-equ1}
&\widetilde\v_i^k\geq\beta\v_i^{k-1}=\beta\left(\beta^{k-1}\v_i^0+(1-\beta)\sum_{t=1}^{k-1}\beta^{k-1-t}|\g_i^t|^2\right)\geq \beta^k\v_i^0\geq \frac{\lambda\sigma_i^2}{e^2},
\end{aligned}
\end{eqnarray}
where we use $\beta^k\geq \frac{1}{e^2}$ for any $k\leq T$ from
\begin{eqnarray}
\begin{aligned}\label{l3-equ3}
k\ln\beta=-k\ln\frac{1}{\beta}\geq -T\frac{1-\beta}{\beta}=-T\frac{\frac{1}{T}}{1-\frac{1}{T}}\geq -2,
\end{aligned}
\end{eqnarray}
since $\ln x\leq x-1$ for any $x>0$.
So we have
\begin{eqnarray}
\begin{aligned}\notag
\left|\frac{1}{\sqrt{\widetilde\v_i^k}}-\frac{1}{\sqrt{\v_i^k}}\right|&=\frac{\left|\v_i^k-\widetilde\v_i^k\right|}{\sqrt{\widetilde\v_i^k}\sqrt{\v_i^k}\left(\sqrt{\v_i^k}+\sqrt{\widetilde\v_i^k}\right)}=(1-\beta)\frac{\left||\g_i^k|^2-|\nabla_i f(\x^k)|^2-\sigma_i^2\right|}{\sqrt{\widetilde\v_i^k}\sqrt{\v_i^k}\left(\sqrt{\v_i^k}+\sqrt{\widetilde\v_i^k}\right)}\\
&\leq (1-\beta)\frac{\left|\g_i^k-\nabla_i f(\x^k)\right|\left|\g_i^k+\nabla_i f(\x^k)\right|+\sigma_i^2}{\sqrt{\widetilde\v_i^k}\sqrt{\v_i^k}\left(\sqrt{\v_i^k}+\sqrt{\widetilde\v_i^k}\right)}\\
&\overset{(1)}\leq \sqrt{1-\beta}\frac{|\g_i^k-\nabla_i f(\x^k)|}{\sqrt{\widetilde\v_i^k}\sqrt{\v_i^k}} + \frac{e(1-\beta)}{\sqrt{\lambda}}\frac{\sigma_i}{\sqrt{\widetilde\v_i^k}\sqrt{\v_i^k}},
\end{aligned}
\end{eqnarray}
and
\begin{eqnarray}
\begin{aligned}\label{l3-equ2}
&\E_k\left[\<\nabla_i f(\x^k),\frac{\g_i^k}{\sqrt{\widetilde\v_i^k}}-\frac{\g_i^k}{\sqrt{\v_i^k}}\>\Big|\F_{k-1}\right]\\
\leq&\sqrt{1-\beta} \E_k\left[\frac{|\g_i^k-\nabla_i f(\x^k)||\nabla_i f(\x^k)||\g_i^k|}{\sqrt{\widetilde\v_i^k}\sqrt{\v_i^k}}\Big|\F_{k-1}\right]+\frac{\sigma_ie(1-\beta)}{\sqrt{\lambda}}\E_k\left[\frac{|\nabla_i f(\x^k)||\g_i^k|}{\sqrt{\widetilde\v_i^k}\sqrt{\v_i^k}}\Big|\F_{k-1}\right],
\end{aligned}
\end{eqnarray}
where we use the definitions of $\widetilde\v_i^k$ and $\v_i^k$ and (\ref{l3-equ1}) in $\overset{(1)}\leq$.
For the first term, we have
\begin{eqnarray}
\begin{aligned}\notag
&\sqrt{1-\beta}\E_k\left[\frac{|\g_i^k-\nabla_i f(\x^k)||\nabla_i f(\x^k)||\g_i^k|}{\sqrt{\widetilde\v_i^k}\sqrt{\v_i^k}}\Big|\F_{k-1}\right]\\
\leq&\frac{|\nabla_i f(\x^k)|^2}{4\sigma_i^2\sqrt{\widetilde\v_i^k}}\E_k\left[|\g_i^k-\nabla_i f(\x^k)|^2\big|\F_{k-1}\right]+\frac{\sigma_i^2(1-\beta)}{\sqrt{\widetilde\v_i^k}}\E_k\left[\frac{|\g_i^k|^2}{\v_i^k}\Big|\F_{k-1}\right]\\
\overset{(2)}\leq&\frac{|\nabla_i f(\x^k)|^2}{4\sqrt{\widetilde\v_i^k}}+\frac{\sigma_ie(1-\beta)}{\sqrt{\lambda}}\E_k\left[\frac{|\g_i^k|^2}{\v_i^k}\Big|\F_{k-1}\right],
\end{aligned}
\end{eqnarray}
where we use Assumption 3 and (\ref{l3-equ1}) in $\overset{(2)}\leq$. For the second term, we have
\begin{eqnarray}
\begin{aligned}\notag
\frac{\sigma_ie(1-\beta)}{\sqrt{\lambda}}\E_k\left[\frac{|\nabla_i f(\x^k)||\g_i^k|}{\sqrt{\widetilde\v_i^k}\sqrt{\v_i^k}}\Big|\F_{k-1}\right]\leq&\frac{|\nabla_i f(\x^k)|^2}{4\sqrt{\widetilde\v_i^k}}+\frac{\sigma_i^2e^2(1-\beta)^2}{\lambda\sqrt{\widetilde\v_i^k}}\E_k\left[\frac{|\g_i^k|^2}{\v_i^k}\Big|\F_{k-1}\right]\\
\overset{(3)}\leq&\frac{|\nabla_i f(\x^k)|^2}{4\sqrt{\widetilde\v_i^k}}+\frac{\sigma_ie^3(1-\beta)^2}{\lambda^{1.5}}\E_k\left[\frac{|\g_i^k|^2}{\v_i^k}\Big|\F_{k-1}\right]\\
\leq&\frac{|\nabla_i f(\x^k)|^2}{4\sqrt{\widetilde\v_i^k}}+\frac{\sigma_ie(1-\beta)}{\sqrt{\lambda}}\E_k\left[\frac{|\g_i^k|^2}{\v_i^k}\Big|\F_{k-1}\right],
\end{aligned}
\end{eqnarray}
where we use (\ref{l3-equ1}) again in $\overset{(3)}\leq$. Plugging the above two inequalities into (\ref{l3-equ2}), we have the conclusion.
\end{proof}

The next lemma is used to bound the norm of the gradient by the function value gap and the second order term, where the latter corresponds to $\frac{|\g_i^t|^2}{\v_i^t}$ from the second order term in Taylor expansion.
\begin{lemma}\label{lemma2}
Suppose that Assumption 1 holds. Letting $\m^0=0$, we have
\begin{eqnarray}
\begin{aligned}\notag
&\|\nabla f(\x^k)-\nabla f(\z^k)\|^2\leq \frac{L^2\theta^2\eta^2}{1-\theta}\sum_{t=1}^{k-1}\theta^{k-1-t}\sum_{i=1}^d\frac{|\g_i^t|^2}{\v_i^t},\\
&\|\nabla f(\x^k)\|^2\leq 4L(f(\z^k)-f^*)+\frac{2 L^2\theta^2\eta^2}{1-\theta}\sum_{t=1}^{k-1}\theta^{k-1-t}\sum_{i=1}^d\frac{|\g_i^t|^2}{\v_i^t},\\
&\sum_{k=1}^K\|\nabla f(\x^k)\|^2\leq 4L\left(\sum_{k=1}^K(f(\z^k)-f^*)+\frac{L\theta^2\eta^2}{2(1-\theta)^2}\sum_{t=1}^{K-1}\sum_{i=1}^d\frac{|\g_i^t|^2}{\v_i^t}\right).
\end{aligned}
\end{eqnarray}
Specially, denote $\sum_{t=1}^{K-1}\sum_{i=1}^d\frac{|\g_i^t|^2}{\v_i^t}=0$ when $K=1$.
\end{lemma}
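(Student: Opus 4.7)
The plan is to prove the three inequalities in order, with the first providing the main gradient-error bound and the other two being essentially corollaries. The key observation is that the auxiliary sequence $\z^k$ differs from $\x^k$ by an explicit linear combination of past preconditioned stochastic gradients, so $L$-smoothness will translate $\|\z^k-\x^k\|^2$ directly into the desired second-order expression.

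First, I will unroll the heavy-ball recursion (\ref{x-update}). Since $\x^0=\x^1$ gives $\x^1-\x^0=0$, induction yields
\begin{equation}\notag
\x^k-\x^{k-1}=-\eta(1-\theta)\sum_{t=1}^{k-1}\theta^{k-1-t}\frac{\g^t}{\sqrt{\v^t}},
\end{equation}
and plugging this into (\ref{diff-z-x}) gives $\z^k-\x^k=-\eta\theta\sum_{t=1}^{k-1}\theta^{k-1-t}\frac{\g^t}{\sqrt{\v^t}}$ componentwise. Then by $L$-smoothness, $\|\nabla f(\x^k)-\nabla f(\z^k)\|^2\leq L^2\|\x^k-\z^k\|^2$, and applying Cauchy--Schwarz (or Jensen with weights $\theta^{k-1-t}$, whose total mass is at most $\frac{1}{1-\theta}$) to the squared norm of the sum yields the first inequality. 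This is the only nontrivial step, and I expect it to be the main obstacle only in the bookkeeping sense of getting the $\frac{\theta^2}{1-\theta}$ factor correct; there is no analytic difficulty beyond the usual convexity-of-squared-norm trick.

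For the second inequality, I will combine the standard $L$-smoothness consequence $\|\nabla f(\z^k)\|^2\leq 2L(f(\z^k)-f^*)$ with the triangle inequality $\|\nabla f(\x^k)\|^2\leq 2\|\nabla f(\z^k)\|^2+2\|\nabla f(\x^k)-\nabla f(\z^k)\|^2$, then substitute the first inequality. The factor $4L$ on $f(\z^k)-f^*$ and the factor $2$ in front of the second-order term come exactly from the two $2$'s in this splitting.

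For the third inequality, I will sum the second over $k=1,\dots,K$ and swap the order of summation in the double sum. Specifically,
\begin{equation}\notag
\sum_{k=1}^K\sum_{t=1}^{k-1}\theta^{k-1-t}\Bigl(\textstyle\sum_i\frac{|\g_i^t|^2}{\v_i^t}\Bigr)=\sum_{t=1}^{K-1}\Bigl(\sum_{k=t+1}^K\theta^{k-1-t}\Bigr)\sum_i\frac{|\g_i^t|^2}{\v_i^t}\le \frac{1}{1-\theta}\sum_{t=1}^{K-1}\sum_i\frac{|\g_i^t|^2}{\v_i^t},
\end{equation}
which converts the extra $\frac{1}{1-\theta}$ from the per-iterate bound into the $\frac{1}{(1-\theta)^2}$ that appears in the stated inequality. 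Gathering the constants (an overall $\frac{2L^2\eta^2\theta^2}{(1-\theta)^2}=4L\cdot\frac{L\eta^2\theta^2}{2(1-\theta)^2}$) finishes the proof. The convention $\sum_{t=1}^{K-1}(\cdot)=0$ at $K=1$ is consistent with $\x^1=\x^0$ (no past increments), so the base case is handled for free.
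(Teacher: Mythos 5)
Your proposal is correct and follows essentially the same route as the paper: the paper writes $\z^k-\x^k$ via $\m^{k-1}=(1-\theta)\sum_{t=1}^{k-1}\theta^{k-1-t}\g^t/\sqrt{\v^t}$ and applies the convexity-of-squares trick to the geometric average, which is exactly your unrolled recursion plus Jensen with total mass $\frac{1}{1-\theta}$; the second and third inequalities are derived identically (the $2+2$ splitting with $\|\nabla f(\z^k)\|^2\leq 2L(f(\z^k)-f^*)$, then summation swap). No gaps.
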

\begin{proof}
For the first part, as the gradient is $L$-Lipschitz, we have
\begin{eqnarray}
\begin{aligned}\notag
\|\nabla f(\x^k)-\nabla f(\z^k)\|^2&\leq L^2\|\x^k-\z^k\|^2\overset{(1)}=\frac{L^2\theta^2}{(1-\theta)^2}\|\x^k-\x^{k-1}\|^2\overset{(2)}=\frac{L^2\theta^2\eta^2}{(1-\theta)^2}\|\m^{k-1}\|^2,
\end{aligned}
\end{eqnarray}
where we use (\ref{diff-z-x}) in $\overset{(1)}=$ and the update of $\x$ in $\overset{(2)}=$. From the update of $\m^k$ in Algorithm \ref{momentum}, we have
\begin{eqnarray}
\begin{aligned}\notag
&\m_i^k=\theta^k \m_i^0+(1-\theta)\sum_{t=1}^{k}\theta^{k-t} \frac{\g_i^t}{\sqrt{\v_i^t}}=(1-\theta)\sum_{t=1}^{k}\theta^{k-t} \frac{\g_i^t}{\sqrt{\v_i^t}}.
\end{aligned}
\end{eqnarray}
Using the convexity of $(\cdot)^2$, we have
\begin{eqnarray}
\begin{aligned}\notag
&|\m_i^k|^2\leq(1-\theta)^2\left(\sum_{t=1}^{k}\theta^{k-t}\right)\left(\sum_{t=1}^{k}\theta^{k-t} \frac{|\g_i^t|^2}{\v_i^t}\right)\leq(1-\theta)\sum_{t=1}^{k}\theta^{k-t} \frac{|\g_i^t|^2}{\v_i^t}.
\end{aligned}
\end{eqnarray}
For the second part, we have
\begin{eqnarray}
\begin{aligned}\notag
\|\nabla f(\x^k)\|^2\leq&2\|\nabla f(\z^k)\|^2+2\|\nabla f(\x^k)-\nabla f(\z^k)\|^2\\
\leq&4L(f(\z^k)-f^*)+\frac{2 L^2\theta^2\eta^2}{1-\theta}\sum_{t=1}^{k-1}\theta^{k-1-t}\sum_{i=1}^d\frac{|\g_i^t|^2}{\v_i^t},
\end{aligned}
\end{eqnarray}
where we use
\begin{eqnarray}
\begin{aligned}\notag
f^*&\leq f\left(\z^k-\frac{1}{L}\nabla f(\z^k)\right)\\
&\leq f(\z^k)-\frac{1}{L}\<\nabla f(\z^k),\nabla f(\z^k)\>+\frac{L}{2}\left\|\frac{1}{L}\nabla f(\z^k)\right\|^2\\
&=f(\z^k)-\frac{1}{2L}\|\nabla f(\z^k)\|^2.
\end{aligned}
\end{eqnarray}
For the third part, we have
\begin{eqnarray}
\begin{aligned}\notag
&\sum_{k=1}^K\|\nabla f(\x^k)\|^2\leq\sum_{k=1}^K\left(4L(f(\z^k)-f^*)+\frac{2 L^2\theta^2 \eta^2}{1-\theta}\sum_{t=1}^{k-1}\theta^{k-1-t}\sum_{i=1}^d\frac{|\g_i^t|^2}{\v_i^t}\right).
\end{aligned}
\end{eqnarray}
Using
\begin{eqnarray}
\begin{aligned}\notag
\sum_{k=1}^K\sum_{t=1}^{k-1}\theta^{k-1-t}\sum_{i=1}^d\frac{|\g_i^t|^2}{\v_i^t}=&\sum_{t=1}^{K-1}\sum_{k=t+1}^{K}\theta^{k-1-t}\sum_{i=1}^d\frac{|\g_i^t|^2}{\v_i^t}\leq\frac{1}{1-\theta}\sum_{t=1}^{K-1}\sum_{i=1}^d\frac{|\g_i^t|^2}{\v_i^t},
\end{aligned}
\end{eqnarray}
we have the conclusion. Specially, when $K=1$, we have $\sum_{k=1}^K\|\nabla f(\x^k)\|^2=\|\nabla f(\x^1)\|^2=\|\nabla f(\z^1)\|^2\leq 2L(f(\z^1)-f^*)$. So we can denote $\sum_{t=1}^{K-1}\sum_{i=1}^d\frac{|\g_i^t|^2}{\v_i^t}=0$ when $K=1$ such that the third part holds for all $K\geq 1$.
\end{proof}

The next two lemmas are used to bound the second order terms in (\ref{equ5-1}).
\begin{lemma}\label{lemma5}
Suppose that Assumptions 1-3 hold. Let $\v_i^0=\lambda\max\left\{\sigma_i^2,\frac{1}{dT}\right\}$ and $\beta=1-\frac{1}{T}$. Then for all $K\leq T$, we have
\begin{eqnarray}
\begin{aligned}\notag
&\frac{1-\beta}{\sqrt{d}}\sum_{i=1}^d\sigma_i\sum_{k=1}^K\E_{\F_K}\left[\frac{|\g_i^k|^2}{\v_i^k}\right]\\
\leq& \sigma_s\ln\left(\frac{4Le^2(1-\beta)}{\lambda\max\{d\min_i\sigma_i^2,\frac{1}{T}\}}\E_{\F_K}\left[ \sum_{k=1}^K(f(\z^k)-f^*)+\frac{L\theta^2 \eta^2}{2(1-\theta)^2}\sum_{t=1}^{K-1}\sum_{i=1}^d\frac{|\g_i^t|^2}{\v_i^t} \right]+\frac{e^2}{\lambda}+e+1\right).
\end{aligned}
\end{eqnarray}
\end{lemma}
\begin{proof}
From Lemma \ref{lemma1}, the concavity of $\ln x$, Holder's inequality, and the definition of $\sigma_s=\sqrt{\sum_i \sigma_i^2}$, we have
\begin{eqnarray}
\begin{aligned}\label{l5-equ1}
&\frac{1-\beta}{\sqrt{d}}\sum_{i=1}^d\sigma_i\sum_{k=1}^K\E_{\F_K}\left[\frac{|\g_i^k|^2}{\v_i^k}\right]\leq\frac{1}{\sqrt{d}}\sum_{i=1}^d\sigma_i\E_{\F_K}\left[\ln \frac{\v_i^K}{\beta^K\v_i^0}\right]\leq\frac{1}{\sqrt{d}}\sum_{i=1}^d\sigma_i\ln \E_{\F_K}\left[\frac{\v_i^K}{\beta^K\v_i^0}\right]\\
&\leq\sqrt{\frac{1}{d}\sum_{i=1}^d\sigma_i^2\sum_{i=1}^d\left(\ln\E_{\F_K}\left[\frac{\v_i^K}{\beta^K\v_i^0}\right]\right)^2}=\sigma_s\sqrt{\frac{1}{d}\sum_{i=1}^d\left(\ln\E_{\F_K}\left[\frac{\v_i^K}{\beta^K\v_i^0}\right]\right)^2}.
\end{aligned}
\end{eqnarray}
From the recursion of $\v^k$, we have $\v_i^K\geq \beta^K\v_i^0$, which leads to $\ln \E_{\F_K}\left[\frac{\v_i^K}{\beta^K\v_i^0}\right]\geq 0$. From the concavity of $(\ln x)^2$ for $x\geq e$, we have
\begin{eqnarray}
\begin{aligned}\label{l5-equ2}
\frac{1}{d}\sum_{i=1}^d\left(\ln\E_{\F_K}\left[\frac{\v_i^K}{\beta^K\v_i^0}\right]\right)^2\leq& \frac{1}{d}\sum_{i=1}^d\left(\ln\left(\E_{\F_K}\left[\frac{\v_i^K}{\beta^K\v_i^0}\right]+e\right)\right)^2\\
\leq& \left(\ln\left(\frac{1}{d}\sum_{i=1}^d\E_{\F_K}\left[\frac{\v_i^K}{\beta^K\v_i^0}\right]+e\right)\right)^2.
\end{aligned}
\end{eqnarray}
Using the recursive update of $\v^k$, Assumptions 2 and 3, (\ref{l3-equ3}), $\v_i^0=\lambda\max\left\{\sigma_i^2,\frac{1}{dT}\right\}$, and Lemma \ref{lemma2}, we have
\begin{eqnarray}
\begin{aligned}\notag
&\frac{1}{d}\sum_{i=1}^d\E_{\F_K}\left[\frac{\v_i^K}{\beta^K\v_i^0}\right]\\
=& \frac{1}{d}\sum_{i=1}^d\frac{(1-\beta)\sum_{k=1}^K\beta^{K-k}\E_{\F_K}\left[|\g_i^k|^2\right]+\beta^K \v_i^0}{\beta^K \v_i^0}\\
\leq& \frac{1}{d}\sum_{i=1}^d\frac{(1-\beta)\sum_{k=1}^K\beta^{K-k}\E_{\F_K}\left[|\nabla_i f(\x^k)|^2+\sigma_i^2\right]}{\beta^K \v_i^0}+1\\
\leq& \frac{e^2}{d}\sum_{i=1}^d\frac{(1-\beta)\sum_{k=1}^K\beta^{K-k}\E_{\F_K}\left[|\nabla_i f(\x^k)|^2\right]+\sigma_i^2}{ \lambda\max\left\{\sigma_i^2,\frac{1}{dT}\right\} }+1\\
\leq&\frac{e^2(1-\beta)}{\lambda\max\{d\min_i\sigma_i^2,\frac{1}{T}\}}\sum_{k=1}^K\beta^{K-k}\E_{\F_K}\left[\|\nabla f(\x^k)\|^2\right]+\frac{e^2}{\lambda}+1\\
\leq&\frac{4Le^2(1-\beta)}{\lambda\max\{d\min_i\sigma_i^2,\frac{1}{T}\}}\E_{\F_K}\left[ \sum_{k=1}^K(f(\z^k)-f^*)+\frac{L\theta^2 \eta^2}{2(1-\theta)^2}\sum_{t=1}^{K-1}\sum_{i=1}^d\frac{|\g_i^t|^2}{\v_i^t} \right]+\frac{e^2}{\lambda}+1.
\end{aligned}
\end{eqnarray}
Plugging into (\ref{l5-equ2}) and (\ref{l5-equ1}), we have the conclusion.
\end{proof}

Specially, replacing each $\sigma_i$ by 1 and $\sigma_s$ by $\sqrt{d}$ in the inductive derivation of (\ref{l5-equ1}), we have the following lemma.
\begin{lemma}\label{lemma4}
Suppose that Assumptions 1-3 hold. Let $\v_i^0=\lambda\max\left\{\sigma_i^2,\frac{1}{dT}\right\}$ and $\beta=1-\frac{1}{T}$. Then for all $K\leq T$, we have
\begin{eqnarray}
\begin{aligned}\notag
&\frac{1-\beta}{d}\sum_{i=1}^d\sum_{k=1}^K\E_{\F_K}\left[\frac{|\g_i^k|^2}{\v_i^k}\right]\\
\leq& \ln\left(\frac{4Le^2(1-\beta)}{\lambda\max\{d\min_i\sigma_i^2,\frac{1}{T}\}}\E_{\F_K}\left[ \sum_{k=1}^K(f(\z^k)-f^*)+\frac{L\theta^2 \eta^2}{2(1-\theta)^2}\sum_{t=1}^{K-1}\sum_{i=1}^d\frac{|\g_i^t|^2}{\v_i^t} \right]+\frac{e^2}{\lambda}+e+1\right).
\end{aligned}
\end{eqnarray}
\end{lemma}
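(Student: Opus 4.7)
The plan is to mirror the proof of Lemma~\ref{lemma5} essentially verbatim, replacing the coordinate weight $\sigma_i$ by $1$. This specialization actually removes the Cauchy--Schwarz step that produced the $\sigma_s = \sqrt{\sum_i \sigma_i^2}$ factor in (\ref{l5-equ1}), so the chain of inequalities becomes slightly shorter.

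First I would apply Lemma~\ref{lemma1} coordinate-wise to obtain $(1-\beta)\sum_{k=1}^K |\g_i^k|^2/\v_i^k \le \ln(\v_i^K/(\beta^K \v_i^0))$. Summing over $i$, dividing by $d$, and taking $\E_{\F_K}[\cdot]$, I invoke concavity of $\ln$ twice: once to push $\E_{\F_K}$ inside the logarithm on each coordinate (valid since the argument is nonnegative because $\v_i^K \ge \beta^K \v_i^0$), and once as Jensen's inequality on the uniform average over $i$. This yields
\[
\frac{1-\beta}{d}\sum_{i=1}^d\sum_{k=1}^K\E_{\F_K}\!\left[\frac{|\g_i^k|^2}{\v_i^k}\right] \le \ln\!\left(\frac{1}{d}\sum_{i=1}^d \E_{\F_K}\!\left[\frac{\v_i^K}{\beta^K \v_i^0}\right]\right).
\]
Note that here, unlike in (\ref{l5-equ1}), no weighted $\ell_2$-trick is needed, so no offset by $e$ arises at this stage.

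Next I would unroll the recursion $\v_i^K = \beta^K \v_i^0 + (1-\beta)\sum_{k=1}^K \beta^{K-k} |\g_i^k|^2$, divide by $\beta^K \v_i^0$, and take the conditional expectation using Assumptions~2--3, giving $\E_k[|\g_i^k|^2 \mid \F_{k-1}] \le |\nabla_i f(\x^k)|^2 + \sigma_i^2$. Combining this with the two lower bounds already established elsewhere in the paper, namely $\beta^K \ge 1/e^2$ (from (\ref{l3-equ3}), valid whenever $K\le T$) and $\v_i^0 \ge \lambda \max\{\sigma_i^2, 1/(dT)\}$, exactly as in Lemma~\ref{lemma5} reproduces
\[
\frac{1}{d}\sum_{i=1}^d \E_{\F_K}\!\left[\frac{\v_i^K}{\beta^K \v_i^0}\right] \le \frac{e^2(1-\beta)}{\lambda\max\{d\min_i \sigma_i^2, 1/T\}}\sum_{k=1}^K \beta^{K-k}\E_{\F_K}[\|\nabla f(\x^k)\|^2] + \frac{e^2}{\lambda} + 1.
\]

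Finally I would drop the $\beta^{K-k}\le 1$ factor and invoke the third part of Lemma~\ref{lemma2} to replace $\sum_k \|\nabla f(\x^k)\|^2$ by $4L\left(\sum_k (f(\z^k)-f^*) + \frac{L\theta^2\eta^2}{2(1-\theta)^2}\sum_{t=1}^{K-1}\sum_i |\g_i^t|^2/\v_i^t\right)$. Inserting this into the logarithm and enlarging the additive constant from $e^2/\lambda + 1$ to $e^2/\lambda + e + 1$ (which is a trivial slack, legal since the argument is positive) yields exactly the claimed bound. The only step requiring any care is tracking how the denominator $\lambda \max\{d \min_i \sigma_i^2, 1/T\}$ emerges from the average over $i$ of $1/\v_i^0$; there is no genuine obstacle, since the entire argument is a near-verbatim specialization of Lemma~\ref{lemma5}.
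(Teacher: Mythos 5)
Your proposal is correct and is essentially the paper's own argument: the paper proves Lemma~\ref{lemma4} precisely by replacing each $\sigma_i$ with $1$ in the proof of Lemma~\ref{lemma5}, which is what you do. Your only deviation is a harmless simplification --- with uniform weights you can apply Jensen's inequality directly to $\ln x$ over the average in $i$ rather than going through the concavity of $(\ln x)^2$ for $x\geq e$, so the additive $e$ inside the logarithm is no longer forced and you correctly reinstate it as slack to match the stated bound.
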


In the next lemma, the key point is that the dominant part on the right hand side of (\ref{l6-equ1}) only depends on $\|\bsigma\|_1$ instead of $L(f(\x^1)-f^*)$, which is crucial to give tight dependence on $L(f(\x^1)-f^*)$ in our theoretical analysis. From (\ref{equ9}), the third part on the right hand side of (\ref{l6-equ1}) is in fact of order $\bO(\sqrt{dT})$, confirming that the first term indeed dominates the bound.
\begin{lemma}\label{lemma6}
Suppose that Assumptions 1-3 hold. Let $\beta\leq 1$ and $\v_i^0=\lambda\max\left\{\sigma_i^2,\frac{1}{dT}\right\}$ with $\lambda\leq 1$. Then for all $K\leq T$, we have
\begin{eqnarray}
\begin{aligned}\label{l6-equ1}
\sum_{i=1}^d\sum_{k=1}^K \E_{\F_{k-1}}\left[\sqrt{\widetilde\v_i^k}\right]\leq K\|\bsigma\|_1+\sqrt{dT}+2\sum_{t=1}^K\sum_{i=1}^d\E_{\F_{t-1}}\left[\frac{|\nabla_i f(\x^t)|^2}{\sqrt{\widetilde\v_i^t}}\right].
\end{aligned}
\end{eqnarray}
\end{lemma}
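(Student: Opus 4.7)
The plan is to establish a pointwise bound on $\sqrt{\widetilde\v_i^k}$ that separates a noise contribution scaling with $\sigma_i$ from the gradient-ratio term $\frac{|\nabla_i f(\x^k)|^2}{\sqrt{\widetilde\v_i^k}}$ that already appears on the right-hand side. Starting from the identity $\sqrt{\widetilde\v_i^k}=\widetilde\v_i^k/\sqrt{\widetilde\v_i^k}$ together with the defining recursion $\widetilde\v_i^k=\beta\v_i^{k-1}+(1-\beta)|\nabla_i f(\x^k)|^2+(1-\beta)\sigma_i^2$, I would split the numerator into its three non-negative pieces. For the first, the pointwise bound $\widetilde\v_i^k\geq\beta\v_i^{k-1}$ gives $\frac{\beta\v_i^{k-1}}{\sqrt{\widetilde\v_i^k}}\leq\sqrt{\beta}\sqrt{\v_i^{k-1}}$; for the third, $\widetilde\v_i^k\geq(1-\beta)\sigma_i^2$ gives $\frac{(1-\beta)\sigma_i^2}{\sqrt{\widetilde\v_i^k}}\leq\sqrt{1-\beta}\,\sigma_i$; and for the middle piece, $(1-\beta)\leq 1\leq 2$ leaves $\frac{|\nabla_i f(\x^k)|^2}{\sqrt{\widetilde\v_i^k}}$, already in the target form with the right constant.

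Next I would take $\E_{\F_{k-1}}$ of both sides and invoke the key auxiliary estimate $\E[\sqrt{\v_i^{k-1}}]\leq\E[\sqrt{\widetilde\v_i^{k-1}}]$. This follows from Assumption~3 and the tower property, which give $\E_{k-1}[\v_i^{k-1}\mid \F_{k-2}]=\beta\v_i^{k-2}+(1-\beta)\E_{k-1}[|\g_i^{k-1}|^2\mid\F_{k-2}]\leq\widetilde\v_i^{k-1}$, combined with Jensen's inequality on the concave map $\sqrt{\cdot}$. Thus in expectation the $\sqrt{\v_i^{k-1}}$ term is replaced by $\sqrt{\widetilde\v_i^{k-1}}$, shifting the index by one and creating a self-similar structure; for $k=1$ the replacement simply yields the boundary value $\sqrt{\v_i^0}$.

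Summing the resulting bound over $k=1,\ldots,K$, the shifted sum $\sqrt{\beta}\sum_{k=1}^{K-1}\E[\sqrt{\widetilde\v_i^k}]$ on the right almost matches $\sum_{k=1}^K\E[\sqrt{\widetilde\v_i^k}]$ on the left, and a single rearrangement leaves only the boundary term $\sqrt{\v_i^0}$, the cumulative noise $K\sqrt{1-\beta}\,\sigma_i$, and the gradient-ratio sum. Summing over $i$ and invoking $\sum_i\sigma_i\leq\sqrt{d}\sigma_s$ (Cauchy--Schwarz), $\sqrt{\v_i^0}\leq\sigma_i+1/\sqrt{dT}$ (from $\lambda\leq 1$), $1-\beta=1/T$, and $K\leq T$, the noise contributions consolidate into $\max\{K\sqrt{d\sigma_s^2},\sqrt{dT}\}$, exactly matching the first summand on the right-hand side of the lemma.

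The hard part will be orchestrating the rearrangement to avoid the $1/(1-\sqrt{\beta})=\Theta(T)$ blow-up that a naive iteration of the per-step inequality would produce. The slack factor of $2$ on the gradient-ratio term is what makes the proof go through: it accommodates the $\frac{2}{1+\beta}$-type coefficients that arise when an AM-GM trick such as $\sqrt{1-\beta}|\nabla_i f(\x^k)|\leq\tfrac12\bigl(\tfrac{|\nabla_i f(\x^k)|^2}{\sqrt{\widetilde\v_i^k}}+(1-\beta)\sqrt{\widetilde\v_i^k}\bigr)$ is used to absorb residual $(1-\beta)\sqrt{\widetilde\v_i^k}$ terms back into the left-hand side in a single shot, without an accumulating geometric series in $\sqrt{\beta}$. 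Carefully distributing constants between the subadditivity splits $\sqrt{a+b}\leq\sqrt{a}+\sqrt{b}$, the pointwise lower bounds on $\widetilde\v_i^k$, and this AM-GM step so that both the clean max-form noise bound and exactly the factor~$2$ on the gradient-ratio term fall out simultaneously is the main bookkeeping challenge.
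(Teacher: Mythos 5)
There is a genuine gap, and it sits exactly where you flagged ``the hard part'': the recursion you set up cannot be resolved without a $\Theta(T)$ loss, and neither of your proposed mitigations repairs it. After your three-way split and the substitution $\E[\sqrt{\v_i^{k-1}}]\leq\E[\sqrt{\widetilde\v_i^{k-1}}]$ (which is itself correct), the per-step inequality has the form $a_k\leq\sqrt{\beta}\,a_{k-1}+\sqrt{1-\beta}\,\sigma_i+b_k$ with $a_k=\E_{\F_{k-1}}[\sqrt{\widetilde\v_i^k}]$. Summing over $k$ and moving the $\sqrt{\beta}$-shifted sum to the left leaves $(1-\sqrt{\beta})\sum_{k\leq K}a_k$ on the left, not $\sum_{k\leq K}a_k$; since $1-\sqrt{\beta}=\Theta(1/T)$ when $\beta=1-1/T$, you must divide through by $\Theta(1/T)$. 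The noise contribution then becomes $\Theta(T)\cdot K\sqrt{1-\beta}\,\sigma_i=\Theta(K\sqrt{T}\,\sigma_i)$, a factor $\sqrt{T}$ larger than the $K\sigma_i$ the lemma requires; and because you discard the factor $(1-\beta)$ on the middle piece (``$(1-\beta)\leq 1$''), the gradient-ratio term acquires a coefficient $\Theta(T)$ rather than $2$. The AM--GM step you propose acts on $\sqrt{1-\beta}\,|\nabla_i f(\x^k)|$, which is not where the blow-up originates, so it cannot absorb either loss.

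Both defects trace to applying $\sqrt{a+b}\leq\sqrt{a}+\sqrt{b}$ at every step. The paper's proof instead keeps the accumulated variance inside the square root: it unrolls $\E[\sqrt{\beta^t\v_i^{k-t}+(1-\beta^t)\sigma_i^2}]$ as a single object, at each stage peeling off only $(1-\beta)\sqrt{\beta^t}\,\E\bigl[|\nabla_i f(\x^{k-t})|^2/\sqrt{\widetilde\v_i^{k-t}}\bigr]$ (note the retained $(1-\beta)$). The terminal term $\sqrt{\beta^k\v_i^0+(1-\beta^k)\sigma_i^2}\leq\sqrt{\v_i^0/\lambda}$ then contributes $O(\max\{\sigma_i,1/\sqrt{dT}\})$ only once per outer index $k$, which after summing over $i$ and $k$ yields exactly $\max\{K\sqrt{d\sigma_s^2},\sqrt{dT}\}$; and the double sum of gradient pieces is controlled by $\sum_{k\geq t}(1-\beta)\sqrt{\beta^{k-t}}\leq\frac{1-\beta}{1-\sqrt{\beta}}=1+\sqrt{\beta}\leq 2$, which is the true origin of the factor $2$. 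To salvage your route you would need to (i) keep the $(1-\beta)$ on the gradient piece so the same cancellation occurs, and (ii) replace the lower bound $\widetilde\v_i^k\geq(1-\beta)\sigma_i^2$ by the much stronger $\widetilde\v_i^k\geq\beta^k\v_i^0\geq\lambda\sigma_i^2/e^2$, which makes the per-step noise $\Theta((1-\beta)\sigma_i/\sqrt{\lambda})$ and survives the division --- but that introduces an extra $e/\sqrt{\lambda}$ that the lemma's clean constant does not tolerate.
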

\begin{proof}
From the definition of $\widetilde\v_i^k$, we have
\begin{eqnarray}
\begin{aligned}\notag
&\E_{\F_{k-1}}\left[\sqrt{\widetilde\v_i^k}\right]\\
=&\E_{\F_{k-1}}\left[\sqrt{\beta\v_i^{k-1}+(1-\beta)\left(|\nabla_i f(\x^k)|^2+\sigma_i^2\right)}\right]\\
=&\E_{\F_{k-1}}\left[\frac{\beta\v_i^{k-1}+(1-\beta)\sigma_i^2}{\sqrt{\beta\v_i^{k-1}+(1-\beta)\left(|\nabla_i f(\x^k)|^2+\sigma_i^2\right)}} + \frac{(1-\beta)|\nabla_i f(\x^k)|^2}{\sqrt{\beta\v_i^{k-1}+(1-\beta)\left(|\nabla_i f(\x^k)|^2+\sigma_i^2\right)}}\right]\\
\leq&\E_{\F_{k-1}}\left[\sqrt{\beta\v_i^{k-1}+(1-\beta)\sigma_i^2}\right]+(1-\beta)\E_{\F_{k-1}}\left[\frac{|\nabla_i f(\x^k)|^2}{\sqrt{\widetilde\v_i^k}}\right].
\end{aligned}
\end{eqnarray}
Consider the first part in the general case. From the recursion of $\v_i^k$, we have
\begin{eqnarray}
\hspace*{-1cm}\begin{aligned}\notag
&\E_{\F_{k-t}}\left[\sqrt{\beta^t\v_i^{k-t}+(1-\beta^t)\sigma_i^2}\right]\\
=&\E_{\F_{k-t}}\left[\sqrt{\beta^{t+1}\v_i^{k-t-1}+\beta^t(1-\beta)|\g_i^{k-t}|^2+(1-\beta^t)\sigma_i^2}\right]\\
=&\E_{\F_{k-t-1}}\left[\E_{k-t}\left[\sqrt{\beta^{t+1}\v_i^{k-t-1}+\beta^t(1-\beta)|\g_i^{k-t}|^2+(1-\beta^t)\sigma_i^2}\Big|\F_{k-t-1}\right]\right]\\
\overset{(1)}\leq&\E_{\F_{k-t-1}}\left[\sqrt{\beta^{t+1}\v_i^{k-t-1}+\beta^t(1-\beta)\E_{k-t}\left[|\g_i^{k-t}|^2|\F_{k-t-1}\right]+(1-\beta^t)\sigma_i^2}\right]\\
\overset{(2)}\leq&\E_{\F_{k-t-1}}\left[\sqrt{\beta^{t+1}\v_i^{k-t-1}+\beta^t(1-\beta)\left(|\nabla_i f(\x^{k-t})|^2+\sigma_i^2\right)+(1-\beta^t)\sigma_i^2}\right]\\
=&\E_{\F_{k-t-1}}\left[\sqrt{\beta^{t+1}\v_i^{k-t-1}+\beta^t(1-\beta)|\nabla_i f(\x^{k-t})|^2+(1-\beta^{t+1})\sigma_i^2}\right]\\
=&\E_{\F_{k-t-1}}\left[\frac{\beta^{t+1}\v_i^{k-t-1}+(1-\beta^{t+1})\sigma_i^2}{\sqrt{\beta^{t+1}\v_i^{k-t-1}+\beta^t(1-\beta)|\nabla_i f(\x^{k-t})|^2+(1-\beta^{t+1})\sigma_i^2}}\right]\\
& + \E_{\F_{k-t-1}}\left[\frac{\beta^t(1-\beta)|\nabla_i f(\x^{k-t})|^2}{\sqrt{\beta^{t+1}\v_i^{k-t-1}+\beta^t(1-\beta)|\nabla_i f(\x^{k-t})|^2+(1-\beta^{t+1})\sigma_i^2}}\right]\\
\leq&\E_{\F_{k-t-1}}\left[\sqrt{\beta^{t+1}\v_i^{k-t-1}+(1-\beta^{t+1})\sigma_i^2}\right]\\
&+\E_{\F_{k-t-1}}\left[\frac{\beta^t(1-\beta)|\nabla_i f(\x^{k-t})|^2}{\sqrt{\beta^{t+1}\v_i^{k-t-1}+\beta^t(1-\beta)|\nabla_i f(\x^{k-t})|^2+(\beta^t-\beta^{t+1})\sigma_i^2}}\right]\\
=&\E_{\F_{k-t-1}}\left[\sqrt{\beta^{t+1}\v_i^{k-t-1}+(1-\beta^{t+1})\sigma_i^2}\right]+\sqrt{\beta^t}(1-\beta)\E_{\F_{k-t-1}}\left[\frac{|\nabla_i f(\x^{k-t})|^2}{\sqrt{\widetilde\v_i^{k-t}}}\right],
\end{aligned}
\end{eqnarray}
where we use the concavity of $\sqrt{x}$ in $\overset{(1)}\leq$ and Assumptions 2 and 3 in $\overset{(2)}\leq$. Applying the above inequality recursively for $t=1,2,\cdots,k-1$, we have
\begin{eqnarray}
\begin{aligned}\notag
&\E_{\F_{k-1}}\left[\sqrt{\beta\v_i^{k-1}+(1-\beta)\sigma_i^2}\right]\\
\leq&\sqrt{\beta^k\v_i^0+(1-\beta^k)\sigma_i^2}+\sum_{t=1}^{k-1}\sqrt{\beta^{k-t}}(1-\beta)\E_{\F_{t-1}}\left[\frac{|\nabla_i f(\x^t)|^2}{\sqrt{\widetilde\v_i^t}}\right]
\end{aligned}
\end{eqnarray}
and
\begin{eqnarray}
\begin{aligned}\notag
\E_{\F_{k-1}}\left[\sqrt{\widetilde\v_i^k}\right]\leq& \sqrt{\beta^k\v_i^0+(1-\beta^k)\sigma_i^2}+\sum_{t=1}^k\sqrt{\beta^{k-t}}(1-\beta)\E_{\F_{t-1}}\left[\frac{|\nabla_i f(\x^t)|^2}{\sqrt{\widetilde\v_i^t}}\right]\\
\leq&\sqrt{\sigma_i^2+\frac{1}{dT}}+\sum_{t=1}^k\sqrt{\beta^{k-t}}(1-\beta)\E_{\F_{t-1}}\left[\frac{|\nabla_i f(\x^t)|^2}{\sqrt{\widetilde\v_i^t}}\right],
\end{aligned}
\end{eqnarray}
where we use $\v_i^0=\lambda\max\left\{\sigma_i^2,\frac{1}{dT}\right\}\leq \sigma_i^2+\frac{1}{dT}$. Summing over $i=1,2,\cdots,d$ and $k=1,2,\cdots,K$, we have
\begin{eqnarray}
\begin{aligned}\notag
\sum_{i=1}^d\sum_{k=1}^K \E_{\F_{k-1}}\left[\sqrt{\widetilde\v_i^k}\right]\leq& K\sum_{i=1}^d\left(\sigma_i+\frac{1}{\sqrt{dT}}\right)+\sum_{k=1}^K\sum_{t=1}^k\sqrt{\beta^{k-t}}(1-\beta)\sum_{i=1}^d\E_{\F_{t-1}}\left[\frac{|\nabla_i f(\x^t)|^2}{\sqrt{\widetilde\v_i^t}}\right]\\
=& K\|\bsigma\|_1+\sqrt{dT}+\sum_{t=1}^K\sum_{k=t}^K\sqrt{\beta^{k-t}}(1-\beta)\sum_{i=1}^d\E_{\F_{t-1}}\left[\frac{|\nabla_i f(\x^t)|^2}{\sqrt{\widetilde\v_i^t}}\right]\\
\leq& K\|\bsigma\|_1+\sqrt{dT}+\frac{1-\beta}{1-\sqrt{\beta}}\sum_{t=1}^K\sum_{i=1}^d\E_{\F_{t-1}}\left[\frac{|\nabla_i f(\x^t)|^2}{\sqrt{\widetilde\v_i^t}}\right]\\
=& K\|\bsigma\|_1+\sqrt{dT}+(1+\sqrt{\beta})\sum_{t=1}^K\sum_{i=1}^d\E_{\F_{t-1}}\left[\frac{|\nabla_i f(\x^t)|^2}{\sqrt{\widetilde\v_i^t}}\right].
\end{aligned}
\end{eqnarray}
\end{proof}

\section{Experimental Details}

We conduct experiments on both computer vision and natural language processing tasks to verify the relationship $\|\nabla f(\x^k)\|_1=\varTheta(\sqrt{d})\|\nabla f(\x^k)\|_2$ during the training of RMSProp and its momentum variant. We call the \texttt{torch.optim.RMSprop} API in PyTorch for both optimizers. Code is released at 
\begingroup
\urlstyle{rm}
\small\url{https://github.com/adonis-dym/Convergence-Rate-RMSProp}
\endgroup.

For computer vision experiments, we train ResNet-50 on both CIFAR-100 and ImageNet datasets. In the CIFAR-100 training task, we set the initial learning rate to $10^{-5}$ and employ a cosine decay scheduler over all 100 training epochs. We set the batch size to 64 and the weight decay to 0.1. For the ImageNet training task, we utilize the timm library protocol \citep{rw2019timm}. The training process spans 200 epochs with a 20-epoch linear warm-up period to increase the learning rate to $10^{-4}$, a 170-epoch cosine decay period to decreases the learning rate to $10^{-5}$, and 10 final epochs with a constant learning rate $10^{-5}$. We set the batchsize to 2048. We maintain identical settings for both optimizers and configure the other parameters using the default settings in the PyTorch API, including assigning the momentum parameter to 0.9 for RMSProp with momentum. At the end of each epoch, we compute the full training loss and gradient by traversing the entire training dataset to accurately measure the gradient norm ratio $\frac{\|\nabla f(\x^k)\|_1}{\|\nabla f(\x^k)\|_2}$.

For natural language processing tasks, we train the classic GPT-2 model from scratch on the OpenWebText dataset using the Megatron-LM framework. Setting the batchsize to 640, we train the model for 50000 steps, equivalent to approximately 3.5 epochs. The training schedule includes a 2000-step linear warm-up period increasing the learning rate to $10^{-5}$ and a cosine decay period for the remaining steps. In our training setting, we employ a decoupled weight decay of 0.05 in the AdamW style, rather than the vanilla implementation of $\ell_2$ regularization in the PyTorch API. Given the computational constraints inherent in large-scale language model training, we approximate full gradients by aggregating over a subset of 100 batches, providing an efficient yet representative estimate of the full gradients.

Our experimental results, as compiled in Figure \ref{figure1} in Section \ref{sec:intr}, demonstrate that the gradient norm ratio $\frac{\|\nabla f(\x^k)\|_1}{\|\nabla f(\x^k)\|_2}$ consistently scales as $\varTheta(\sqrt{d})$. This empirical observation confirms that the convergence rate derived in this study is in accordance with that of SGD with respect to the problem dimension $d$.

\section*{Conclusion and Future Work}

This paper studies the classical RMSProp and its momentum extension. We establish the convergence rate of $\frac{1}{T}\sum_{k=1}^T\E\left[\|\nabla f(\x^k)\|_1\right]\leq \widetilde\bO\left(\frac{\sqrt{d}}{T^{1/4}}\left(\sqrt[4]{\sigma_s^2L(f(\x^1)-f^*)}\right)\right)$ measured by $\ell_1$ norm without the bounded gradient condition. Our convergence rate can be considered to be analogous to the $\frac{1}{T}\sum_{k=1}^T\E\left[\|\nabla f(\x^k)\|_2\right]\leq \bO\left(\frac{1}{T^{1/4}}\sqrt[4]{\sigma_s^2L(f(\x^1)-f^*)}\right)$ one of SGD in the ideal case of $\|\nabla f(\x)\|_1=\varTheta(\sqrt{d})\|\nabla f(\x)\|_2$ for high-dimensional problems. One interesting future work is to establish the lower bound of adaptive gradient methods measured by $\ell_1$ norm. We conjecture that the lower bound is $\bO\left(\frac{\sqrt{d}}{T^{1/4}}\left(\sqrt[4]{\sigma_s^2L(f(\x^1)-f^*)}\right)\right)$.

\appendix
\section{Proof of Lemma \ref{lemma1}}

\begin{proof}
From $\ln(1-x)\leq -x$ for any $x<1$, we have
\begin{eqnarray}
\begin{aligned}\notag
&(1-\beta)\frac{g_t^2}{v_t}\leq-\ln\left(1-(1-\beta)\frac{g_t^2}{v_t}\right)=-\ln\frac{v_t-(1-\beta)g_t^2}{v_t}=-\ln\frac{\beta v_{t-1}}{v_t}=\ln\frac{v_t}{\beta v_{t-1}}
\end{aligned}
\end{eqnarray}
and
\begin{eqnarray}
\begin{aligned}\notag
&(1-\beta)\sum_{t=1}^k\frac{g_t^2}{v_t}\leq\ln\frac{v_k}{\beta^kv_0}.
\end{aligned}
\end{eqnarray}
\end{proof}

\section{Proof of $c\ln x\leq x$ for all $x\geq 3c\ln c$ and $c\geq 3$}\label{appendixB}
\begin{proof}
Denote $f(x)=\frac{\ln x}{x}$. Since $f'(x)=\frac{1}{x^2}-\frac{\ln x}{x^2}\leq 0$ when $x\geq e$, $f(x)$ is decreasing when $x\geq e$ and $\frac{\ln x}{x}\leq\frac{\ln (3c\ln c)}{3c\ln c}=\frac{1}{c}\frac{\ln c+\ln(3\ln c)}{3\ln c}\leq \frac{1}{c}\frac{\ln c+\ln c^2}{3\ln c}=\frac{1}{c}$, where we use $\ln c\leq c$ and $3\ln c\leq c^2$ for $c\geq 3$.
\end{proof}

\bibliography{RMSProp}
\bibliographystyle{icml2020}

\end{document}